\DeclareFontFamily{OT1}{pzc}{}
\DeclareFontShape{OT1}{pzc}{m}{it}{<-> s * [1.10] pzcmi7t}{}
\DeclareMathAlphabet{\mathpzc}{OT1}{pzc}{m}{it}                 
\newtheorem{theorem}{Theorem}
\newtheorem*{Example}{Example}
\newtheorem{Remark}{Remark}
\newtheorem{Alg}[theorem]{Algorithm}
\newtheorem*{Defn}{Definition}
\newtheorem{lemma}{Lemma}
\newtheorem{corollary}{Corollary}
\newcommand\R{{\mathbb R}}
\newcommand\MP{\mathcal{P}}
\newcommand\MN{\mathcal{N}}
\newcommand\MT{\mathcal{T}}
\newcommand\MPon{\MP_{o}^n}
\newcommand\MTon{\MT_o^n}
\newcommand\cf{C(\R^n)}
\newcommand\cfo{C(\R^n\setminus\{o\})}
\newcommand{\ro}{\R^{n}\setminus \{o\}}
\newcommand\sln{\operatorname{SL}(n)}
\newcommand{\dif}{\mathop{}\!\mathrm{d}}             
\newcommand\relint{\operatorname{relint}}
\newcommand{\ab}[1]{\left(#1\right)}
\newcommand{\ten}[1]{\mathbbm{t}^{}}
\newcommand{\symtenset}[2]{\operatorname{Sym}^{#2}(\R^{#1})}
\newcommand{\subjclass}[2][1991]{%
  \let\@oldtitle\@title%
  \gdef\@title{\@oldtitle\footnotetext{#1 \emph{Mathematics subject classification.} #2}}%
}
\newcommand{\keywords}[1]{%
  \let\@@oldtitle\@title%
  \gdef\@title{\@@oldtitle\footnotetext{\emph{Key words and phrases.} #1.}}%
}
\title{\bf{$\sln$ contravariant function-valued valuations on polytopes}}
\author[]{Zhongwen Tang \footnote{tzwlxsx@163.com}}
\author[]{Jin Li \footnote{li.jin.math@outlook.com}}
\author[]{Gangsong Leng \footnote{lenggangsong@163.com}}
\affil[]{Department of Mathematics, Shanghai University, Shanghai, China, 200444}
\affil[]{Newtouch Center for Mathematics, Shanghai University, Shanghai, China,  200444}
\date{}
\subjclass[2020]{52B45, 52B11, 52A39}
\keywords{Function-valued valuation; $\sln$ contravariant; Projection function; Tensor-valued valuation}
\begin{document}

\maketitle

\begin{abstract}
We present a complete classification of $\sln$ contravariant, $\cfo$-valued valuations on polytopes, without any additional assumptions.
It extends the previous results of the second author [Int. Math. Res. Not. 2020] which have a good connection with the $L_p$ and Orlicz Brunn-Minkowski theory.
Additionally, our results deduce a complete classification of $\sln$ contravariant symmetric-tensor-valued valuations on polytopes.
\end{abstract}

\section{Introduction}

Let $\mathcal{K}^n$ be the set of convex bodies (i.e., compact convex set) in Euclidean space $\mathbb{R}^n$.
In this paper, we always assume $n \ge 3$.
A map $Z$ mapping from a subset of $\mathcal{K}^n$ to an Abelian semigroup $\langle \mathcal{A,+} \rangle$ is called a \emph{valuation} provided
\begin{equation}
Z K+Z L=Z(K\cup L)+Z(K \cap L)\label{def},
\end{equation}
whenever $K,L,K\cup L,K\cap L$ are contained in this subset.
Valuations on polytopes originated in Dehn's solution of Hilbert's Third Problem in 1901 play important roles in convex geometry.
Starting with the celebrated Hadwiger characterization theorem of intrinsic volumes, the theory of valuations has attracted great interest, especially in the last decades; see \cite{alesker1999,haberl2012,haberl2014,haberl2017,klain1995,ludwig2005,ludwig2006,ludwig2010,ludwig2017,ma2021lyz,schuster2018,CLMhadwiger2}.

A \emph{function valued valuation} is a valuation taking values in some function space where addition in $(\ref{def})$ is the ordinary addition of functions.
Previously, the second author established a classification of measurable and $\sln$ contravariant, $C(\R^n)$ valued valuations on polytopes in \cite{li2020} and classification of $\sln$ covariant, $\cfo$ valued valuations on polytopes with some regularity assumptions in \cite{li2021} which extend many results of real-valued valuations, Minkowski valuations, and star body valued valuations and have a good connection with the $L_p$ and Orlicz Brunn-Minkowski theory; see e.g. \cite{lutwak1993,lutwak2000,gardner2014,xi2014,ludwig2005,haberl2017,LYZ00b,CLM2017Min}.
Here $C(\mathbb{R}^n \setminus \{o\})$ is the set of continuous functions $f: \mathbb{R}^n\setminus \{o\} \to \R$ and $C(\mathbb{R}^n)$ is the set of continuous functions $f: \mathbb{R}^n\to \R$.

This paper establishes a complete classification of $\sln$ contravariant $\cfo$ valued valuations on polytopes without any additional assumptions.
The reason that we consider $\cfo$ instead of $\cf$ is that $\ro$ and $\{o\}$ are two different orbits of $\sln$ which may suggest that we do not need the valued functions to be continuous or even defined at the origin.
Our results also cover classifications of real-valued valuations (Ludwig, Reitzner \cite{ludwig2017}), vector-valued valuations (Li, Ma, Wang \cite{li2022}), and symmetric-matrix-valued valuations (Ma, Wang \cite{ma2021lyz}) where measurability are also not imposed.
Here we always assume a polytope is convex.

Let $\MP^n$ be the set of polytopes in $\R^n$ and $\MPon$ be its subset that all polytopes contain the origin.
A function-valued valuation $Z:\mathcal{P}^n\rightarrow C(\mathbb{R}^n\setminus \{o\})$ is called \emph{${\sln}$ contravariant} if
$$Z(\phi P)(x)=ZP(\phi^{-1}x)$$
for every $x\in\ro$, $P\in \mathcal{P}^n$ and $\phi\in \sln$.

Let $h_P$ be the support function of $P \in \MP^n$.
We denote by $V_P$ the cone-volume measure of $P$.
Using the surface area measure $S_P$, the cone-volume measure can be written as $\dif V_{P}=\frac{1}{n} h_P \dif S_{P}$ (Notice that we allow $o \notin P$ so that $V_P$ is a signed measure).
Since the cone-volume measure of a polytope is discrete, we do not distinguish $V_P(u)$ and $V_P(\{u\})$.
Let $\mathcal{N}(P)$ denote the set of all outer unit normals of facets of $P$ and $\mathcal{N}_o(P)$ denote the subset of $\MN(P)$ such that the affine hulls of the corresponding facets do not contain the origin.

Previously known $\cf$ valued $\sln$ contravariant valuations are included in our results.
For instance, $V_1(P,[-x,x])=\frac{1}{n}\sum_{u\in \MN(P)} |x\cdot u|dS_{P}(u)=\frac{2}{n}V_{n-1}(P|x^\bot)|x|$ is the classical projection function of $P$ for $x \in \ro$, and $V_{0}(P)$ is the Euler characteristic, that is, $V_0(P)=1$ if $P$ is not an empty set and $0$ otherwise.
A local version of the Euler characteristic is $(-1)^{\dim P}V_0(o \cap \relint P)$, where $\dim P$ is the dimension of the affine hull of $P$ and $\relint P$ is the relative (with respect to the affine hull of $P$) interior of $P$.
See a local version of the Euler-Schl\"{a}fli-Poincar\'{e} formula in \cite{li2021}.
The new valuation (which may not be measurable) is defined as
\begin{align*}
\Pi_\zeta(P)(x):=\sum\limits_{u\in \mathcal{N}_o(P)}
\zeta\left(\frac{x\cdot u}{h_P(u)},V_P(u)\right), ~P \in \MP^n, ~x \in \R^n
\end{align*}
for some binary function $\zeta:\mathbb{R}^2\rightarrow \mathbb{R}$ satisfying that $\zeta(t,\cdot)$ is a Cauchy function on $\R$ for every $t\in \mathbb{R}$ and $\zeta(\cdot,s)$ is continuous on $\mathbb{R}$ for every $s\in \R$.
Here we say a function $f:\R ~(\text{or~}(0,\infty)) \to \R$ is a Cauchy function if
$f(r+s)=f(r)+f(s)$
for every $r,s\in \R$ (or $r,s >0$, respectively).

Our first result is the following.
\begin{theorem}\label{thm0}
Let $n\geq 3$. A map  $Z:\mathcal{P}_{o}^n \rightarrow C(\mathbb{R}^n  \setminus \{ o \})$ is an ${\sln}$ contravariant valuation if and only if there are constants $c_{n-1},c_0,c_0'\in \mathbb{R}$ and a binary function $\zeta:\mathbb{R}\times (0,\infty)\rightarrow \mathbb{R}$ satisfying that $\zeta$ is Cauchy function on $(0,\infty)$ for every $t\in \mathbb{R}$ and $\zeta(\cdot,s)$ is continuous on $\mathbb{R}$ for every $s>0$ such that
\begin{align*}
ZP(x)   &=\Pi_\zeta(P)(x)+c_{n-1}V_1(P,[-x,x])  +c_0V_0(P)+c_0'(-1)^{\dim P}V_0(o \cap \relint P)
\end{align*}
for every $P\in\mathcal{P}_o^n$ and $x\in \mathbb{R}^n \setminus \{o\}$.
\end{theorem}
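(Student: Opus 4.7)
The plan is to verify the ``if'' direction by a direct check on each summand, and then prove the ``only if'' direction by evaluating $Z$ on an increasingly rich family of polytopes in $\MPon$---first the singleton $\{o\}$, then simplices $[o,v_1,\dots,v_k]$ having $o$ as a vertex, and finally general $P\in\MPon$ via cone decomposition. Throughout, $\sln$ contravariance is the main lever: it collapses the dependence of $ZP$ on $P$ down to a handful of scalar invariants attached to each facet of $P$.

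For the ``if'' direction one checks independently that each of the four terms is an $\sln$ contravariant valuation. The projection function $V_1(P,[-x,x])$ and the Euler characteristic $V_0(P)$ are classical; the local Euler contribution $(-1)^{\dim P}V_0(o\cap\relint P)$ is a valuation by the local Euler--Schl\"afli--Poincar\'e formula referenced in the introduction and is $\sln$ invariant since $\phi o=o$ and $\phi(\relint P)=\relint(\phi P)$. For $\Pi_\zeta(P)(x)$, $\sln$ contravariance follows from the standard transformation rules for support functions and cone-volume measures under linear maps, while the valuation property reduces, via the Cauchy property of $\zeta(t,\cdot)$, to the additivity of the unordered multiset $\{(u,V_P(u)):u\in\MN_o(P)\}$ under $P\mapsto P\cup Q$ and $P\mapsto P\cap Q$.

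For the ``only if'' direction, I would begin with $P=\{o\}$: since $\phi\{o\}=\{o\}$ and $\sln$ acts transitively on $\ro$ for $n\geq 3$, the function $Z\{o\}\in\cfo$ is constant, and this constant matches the contribution $c_0+c_0'$ coming from the two Euler terms (the finer split is determined later by comparing with polytopes whose relative interior does or does not contain $o$). Next, I would analyze the standard simplex $T_k=[o,e_1,\dots,e_k]$ for each $k\leq n$, exploiting its large stabilizer in $\sln$. Combining this symmetry with the continuity of $Z(T_k)$ on $\ro$ forces $Z(T_k)(x)$ to be a sum over the facet normals $u_i$ of $T_k$ of values of a single continuous one-variable function evaluated at $x\cdot u_i/h_{T_k}(u_i)$, plus isotropic pieces that are absorbed into the projection and Euler terms. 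Transporting this description by arbitrary $\phi\in\sln$ upgrades the sum to the desired $\Pi_\zeta$ form for a binary function $\zeta$ whose continuity in the first variable is inherited from that of $ZP$.

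Finally, I would extend from simplices to arbitrary $P\in\MPon$ by triangulating $P$ into simplices with apex $o$ and applying the valuation identity along a sequence of hyperplane cuts; from a standard subdivision argument on simplices one extracts the Cauchy identity $\zeta(t,s_1+s_2)=\zeta(t,s_1)+\zeta(t,s_2)$ for $s_1,s_2>0$. I expect the main obstacle to be the absence of a measurability assumption: $\zeta(t,\cdot)$ can in principle be a pathological $\mathbb{Q}$-linear but non-$\mathbb{R}$-linear function, so the usual shortcut of upgrading Cauchy to linear is unavailable, and the entire induction must be executed purely at the level of the valuation property and $\sln$ contravariance, with careful book-keeping of the lower-dimensional Euler-type contributions as one crosses faces that contain the origin.
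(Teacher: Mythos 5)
Your overall skeleton---reduce to simplices with a vertex at the origin, exploit the stabilizer for lower-dimensional simplices, and extract Cauchy functional equations from hyperplane dissections---matches the paper's strategy, but there are two concrete gaps. First, for the full-dimensional simplex $[o,e_1,\dots,e_n]$ the stabilizer in $\sln$ is only the finite group permuting $e_1,\dots,e_n$, so symmetry alone does not ``force $Z(T_k)(x)$ to be a sum over the facet normals.'' The paper instead first determines $Z(sT^n)$ only on the line $\R e_n$ via the dissection $sT^n=(sT^n\cap H_\lambda^-)\cup(sT^n\cap H_\lambda^+)$, obtaining $Z(sT^n)(te_n)=\zeta(t/s,s^n/n!)$, and then needs a separate induction (Lemma \ref{lm7}, imported from \cite{li2020}) to propagate this from the $e_n$-axis to all $x\in\ro$. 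Your proposal never addresses that propagation step.

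Second, you misidentify the main obstacle. The absence of measurability is not the issue: the theorem's conclusion already permits $\zeta(t,\cdot)$ to be an arbitrary (possibly nonlinear) Cauchy function, so no upgrade to linearity is required anywhere. The genuinely new difficulty relative to \cite{li2020} is that the dissection argument only defines $\zeta(t,s)$ for $t\neq 0$, since $t=0$ would correspond to evaluating $Z(sT^n)$ at the origin, which lies outside the domain of functions in $\cfo$; yet $\Pi_\zeta(P)(x)$ requires $\zeta(0,\cdot)$ whenever $x$ is orthogonal to some facet normal of $P$. The paper's key new step (Lemma \ref{lm6}) proves that $\lim_{t\to 0}\zeta(t,s)$ exists by deriving $Z(sT^n)(x_1e_1-re_2)=\zeta\bigl((x_1-r)/s,\,s^n/n!\bigr)$ from the dissection identity and letting $x_1\to r$, the limit point $r(e_1-e_2)$ being a legitimate nonzero point of continuity of $Z(sT^n)$. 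Without this step $\Pi_\zeta$ is not even well-defined on all of $\ro$, and the claimed representation cannot be stated.
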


We equip $\mathcal{P}^n$ with the Hausdorff metric and equip $C(\mathbb{R}^n  \setminus \{o\})$ (or $C(\mathbb{R}^n)$) the topology induced by uniform convergence on any compact subset in $C(\mathbb{R}^n  \setminus \{o\})$ (or $C(\mathbb{R}^n)$, respectively).
A map on $\mathcal{P}^n$ or $\mathcal{P}_{o}^n$ is (Borel) measurable if the pre-image of every open set in $C(\mathbb{R}^n \setminus \{o\})$ is a Borel set (with respect to the Hausdorff metric).
It is easy to see that $P \to \Pi_\zeta(P)(x)$ is measurable if and only if
\begin{align*}
\Pi_\zeta(P)(x)
=\sum\limits_{u\in \mathcal{N}_o(P)} \eta\ab{\frac{x\cdot u}{h_P(u)}} V_P(u)
\end{align*}
for some continuous function $\eta:\R \to \R$.
That implies the ``only if" part of the following corollary.

\begin{corollary}\label{cor1}
Let $n\geq 3$. A map $Z: \mathcal{P}_{o}^{n} \rightarrow  {C}(\mathbb{R}^n \setminus \{ o \})$ is a measurable, ${\sln}$ contravariant valuation if and only if there are constants $c_0, c_{0}', c_{n-1}\in \mathbb{R}$ and a continuous function $\eta: \mathbb{R} \rightarrow \mathbb{R}$ such that
\begin{equation}\label{eqeta}
\begin{aligned}
ZP(x) &=\sum_{u\in \mathcal{N}_{o}(P)} \eta \ab{  \frac{x\cdot u}{ h_{P}(u)} } V_{P}(u)   \\
&\qquad + c_{n-1} V_{1}(P,[-x,x])
+ c_0 V_{0}(P) +c_{0}' (-1)^{\dim{P}}  V_0(o \cap \relint P)
\end{aligned}
\end{equation}
for every $P\in \mathcal{P}_{o}^{n}$ and $x\in \mathbb{R}^{n} \backslash \{ o  \}$.
\end{corollary}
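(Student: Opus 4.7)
The plan is to derive Corollary~\ref{cor1} from Theorem~\ref{thm0} by converting the abstract pair of conditions on $\zeta$ into the concrete form $\zeta(t,s)=\eta(t)s$ once measurability is imposed.

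\emph{Sufficiency.} Given a continuous $\eta:\R\to\R$ and constants $c_0,c_0',c_{n-1}\in\R$, set $\zeta(t,s):=\eta(t)s$; this $\zeta$ is Cauchy in $s$ and continuous in $t$, so by Theorem~\ref{thm0} the right-hand side of \eqref{eqeta} defines an $\sln$ contravariant valuation $Z:\MPon\to\cfo$. Measurability in the Hausdorff metric is routine: $V_1(P,[-x,x])$ is continuous in $P$, while $V_0(P)$ and $(-1)^{\dim P}V_0(o\cap\relint P)$ are Borel (locally constant on each combinatorial stratum), and the finite sum $\sum_{u\in\MN_o(P)}\eta(x\cdot u/h_P(u))V_P(u)$ is continuous on each of the countably many strata of $\MPon$ of fixed facial type.

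\emph{Necessity.} Assume $Z$ is measurable. Theorem~\ref{thm0} supplies $\zeta$ and constants $c_0,c_0',c_{n-1}$ for which the general formula of Theorem~\ref{thm0} holds. Fix $x\in\ro$. Subtracting from the measurable map $P\mapsto ZP(x)$ the three measurable terms $c_{n-1}V_1(P,[-x,x])$, $c_0V_0(P)$ and $c_0'(-1)^{\dim P}V_0(o\cap\relint P)$ shows that $P\mapsto\Pi_\zeta(P)(x)$ is a Borel function of $P$. It remains to deduce $\zeta(t,s)=\eta(t)s$ for some continuous $\eta:\R\to\R$. To isolate a single summand in $\Pi_\zeta$, fix $t\in\R$ and construct a one-parameter family $\{P_\lambda\}_{\lambda>0}\subset\MPon$ carrying a distinguished facet with outer unit normal $u_0\in\MN_o(P_\lambda)$ such that $x\cdot u_0/h_{P_\lambda}(u_0)\equiv t$ and $V_{P_\lambda}(u_0)=\lambda$, while the data $(x\cdot u/h_{P_\lambda}(u),\,V_{P_\lambda}(u))$ attached to the remaining facets depend continuously on $\lambda$. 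Composing with the measurable map $\Pi_\zeta(\cdot)(x)$ makes $\lambda\mapsto\zeta(t,\lambda)$ Borel on $(0,\infty)$; since $\zeta(t,\cdot)$ is Cauchy, the classical theorem that a measurable Cauchy function on $(0,\infty)$ is linear forces $\zeta(t,\lambda)=\eta(t)\lambda$ for some $\eta(t)\in\R$. Continuity of $\zeta(\cdot,s)$ for each fixed $s>0$ transfers to continuity of $\eta$.

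\emph{Main obstacle.} The only delicate point is engineering the family $P_\lambda$ so that exactly one cone-volume varies linearly in $\lambda$ while $t$ and the remaining cone-volume/support-function data depend only continuously on $\lambda$. A concrete recipe is to take a pyramid over a fixed $(n-1)$-dimensional base placed in a hyperplane orthogonal to $u_0$, then translate this hyperplane along $u_0$ while adjusting the apex so as to preserve $o\in P_\lambda$ and the facial type; the invariance of $t$ and the continuity of the background are then direct verifications using the support function and vertex coordinates. Everything else in the argument is an immediate application of Theorem~\ref{thm0} and the observation preceding the corollary.
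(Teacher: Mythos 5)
Your overall strategy coincides with the paper's: derive the corollary from Theorem \ref{thm0}, show that measurability of $Z$ forces $\zeta(t,\cdot)$ to be a measurable Cauchy function, hence linear, and read off the continuity of $\eta$ from that of $\zeta(\cdot,s)$. The endgame (measurable Cauchy $\Rightarrow$ linear, $\eta(t)=\zeta(t,1)$ continuous, and the existence of $\eta(0)=\lim_{t\to0}\eta(t)$ via Lemma \ref{lm6}) is fine. The problem is the step you yourself flag as the ``main obstacle'': your family $P_\lambda$ does not deliver the measurability of $\lambda\mapsto\zeta(t,\lambda)$. You only arrange that the data $\bigl(x\cdot u/h_{P_\lambda}(u),\,V_{P_\lambda}(u)\bigr)$ of the \emph{remaining} facets depend continuously on $\lambda$, and then subtract the background from $\Pi_\zeta(P_\lambda)(x)$. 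But the background is $\sum_{u}\zeta\bigl(\tau_u(\lambda),\sigma_u(\lambda)\bigr)$, and $\zeta$ is only known to be continuous in its first argument and an \emph{arbitrary} (a priori non-measurable) Cauchy function in its second. If any $\sigma_u(\lambda)=V_{P_\lambda}(u)$ is non-constant in $\lambda$, the composite $\lambda\mapsto\zeta(\tau_u(\lambda),\sigma_u(\lambda))$ need not be Borel, so you cannot conclude that $\zeta(t,\lambda)=\Pi_\zeta(P_\lambda)(x)-(\text{background})$ is Borel. Continuity of the background \emph{data} is not enough; you would need the remaining facets either to have constant cone volumes or, better, to be absent from $\mathcal{N}_o(P_\lambda)$ altogether.

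The paper's fix is exactly the latter and is already packaged in Lemma \ref{lm5}: take $P=sT^n=s[o,e_1,\dots,e_n]$ and $x=ste_n$. All facets of $sT^n$ except the one opposite the origin have affine hulls through $o$, so $\mathcal{N}_o(sT^n)$ is a singleton and $\Pi_\zeta(sT^n)(ste_n)=\zeta\bigl(t,\tfrac{s^n}{n!}\bigr)$ with \emph{no} background; measurability of $s\mapsto Z(sT^n)(ste_n)$ (via the factorization $s\mapsto(ste_n,sT^n)\mapsto(ste_n,Z(sT^n))\mapsto Z(sT^n)(ste_n)$) then gives measurability of $\zeta(t,\cdot)$ directly. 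Your pyramid construction would work if you insisted that the apex be the origin itself (so that every lateral facet hyperplane contains $o$), which essentially reproduces the simplex argument; as written, ``adjusting the apex so as to preserve $o\in P_\lambda$'' leaves the lateral facets in $\mathcal{N}_o(P_\lambda)$ and the gap open. A secondary, smaller point: in the sufficiency direction, the measurability of $P\mapsto\sum_{u\in\mathcal{N}_o(P)}\eta(x\cdot u/h_P(u))V_P(u)$ is not quite ``routine'' (the set $\mathcal{N}_o(P)$ jumps discontinuously); the paper does not reprove it but imports it from Theorem 1.2 of \cite{li2020} and only adjusts the topology on the target space.
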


Then the ``only if" part of Corollary \ref{cor1} implies the ``only if" part of Theorem 1.2 in \cite{li2020}, which is the following.
\begin{corollary}[Li \cite{li2020}]\label{cor0815}
Let $n\ge 3$. If $Z: \mathcal{P}_{o}^{n} \rightarrow  C(\mathbb{R}^n )$ is a measurable, $\sln$ contravariant valuation, then there are constants $c_0, c_{0}', c_{n-1}\in \mathbb{R}$ and a continuous function ${\eta}: \mathbb{R} \rightarrow \mathbb{R}$ such that
\eqref{eqeta} holds for every $P\in \mathcal{P}_{o}^{n}$ and $x\in \mathbb{R}^{n} $.
\end{corollary}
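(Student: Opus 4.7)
The plan is to deduce Corollary \ref{cor0815} from Corollary \ref{cor1} by a restriction-and-continuity argument. The key observation is that every $\sln$ contravariant valuation $Z: \MPon \to C(\R^n)$ can be viewed as a valuation into $\cfo$ by composing with the restriction map $C(\R^n) \to C(\R^n\setminus\{o\})$. This restriction is continuous with respect to uniform convergence on compacta, since every compact subset of $\R^n\setminus\{o\}$ is compact in $\R^n$. Hence measurability is preserved, and the valuation property and $\sln$ contravariance transfer trivially because neither refers to the value at the origin.

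With $Z$ so regarded, Corollary \ref{cor1} produces constants $c_{n-1},c_0,c_0'\in\R$ and a continuous $\eta:\R\to\R$ such that \eqref{eqeta} holds for every $P\in\MPon$ and every $x\in\R^n\setminus\{o\}$. To upgrade to $x\in\R^n$, I would check that the right-hand side of \eqref{eqeta} is itself continuous on all of $\R^n$ in the variable $x$: each summand $\eta(x\cdot u/h_P(u))\,V_P(u)$ is continuous in $x$ because $\eta$ is continuous and $h_P(u)>0$ for $u\in\MN_o(P)$; the projection term $c_{n-1}V_1(P,[-x,x])=\frac{2c_{n-1}}{n}V_{n-1}(P|x^\bot)|x|$ is continuous and vanishes at $o$; and the remaining two terms are independent of $x$. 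Since $ZP\in C(\R^n)$ by hypothesis and the explicit formula on the right-hand side is also continuous on $\R^n$, and the two continuous functions coincide on the dense set $\R^n\setminus\{o\}$, they coincide everywhere on $\R^n$.

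No substantive obstacle is expected: all the analytic and combinatorial content is absorbed into Corollary \ref{cor1}, and the present statement follows because the canonical representative produced there happens to extend continuously through the origin. The proof is thus essentially a one-line reduction, and the only thing to be careful about is the verification that the restriction map is continuous (hence preserves measurability) and that each of the four building blocks on the right-hand side of \eqref{eqeta} admits a continuous extension to $x=o$.
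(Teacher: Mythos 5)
Your proposal is correct and follows essentially the same route as the paper's own argument (its ``Claim 1''): view $Z$ as a valuation into $C(\mathbb{R}^n\setminus\{o\})$ via the restriction map, note that this preserves measurability because the restricted topology is coarser, apply Corollary \ref{cor1}, and extend the resulting identity to $x=o$ by continuity of both sides. The only difference is that you spell out the continuity of each term of \eqref{eqeta} at the origin, a step the paper leaves implicit.
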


Using our main results, we can also derive many similar corollaries as in \cite{li2020} without assuming the measurability.
However, since those proofs are basically the same, we would like not to repeat such arguments but encourage readers with an interest in this matter to read \cite{li2020} and change the proofs slightly by themselves.
We only derive here the complete classifications of $\sln$ contravariant, symmetric-tensor-valued valuations including previously mentioned vector-valued valuations and symmetric-matrix-valued valuations \cite{li2022,ma2021lyz}.

Let $p \ge 1$ be an integer and $\symtenset{n}{p}$ be the space of symmetric tensors of $p$.
Here a tensor $\ten{p}$ of order $p$ is identified as a multilinear function on $(\R^n)^p$.
A symmetric tensor $\ten{p}$ is identified by its action on $x^p$, denoted by $\langle \ten{p}, x^p \rangle$, for $x\in \R^n$ and $x^p$ denotes the $p$-times tensor product of $x$.
For any $\ten{p} \in \symtenset{n}{p}$ and $\phi \in \sln$, we denote $\phi \ten{p}$ the natural action of $\phi$ to $\ten{p}$.
That is,
$\langle \phi \ten{p}, x^p \rangle =
\langle  \ten{p},  (\phi^t x)^p \rangle$
for any $x \in \R^n$.
We say a map $\mu: \MPon \to \symtenset{n}{p}$ is \emph{$\sln$ contravariant} if
\begin{align*}
\mu(\phi P) = \phi^{-t} \mu P
\end{align*}
for any $\phi \in \sln$;
it is a \emph{symmetric-tensor-valued valuation} if \eqref{def} holds for the addition of tensors.
Now it is easy to see that $\mu: \MPon \to \symtenset{n}{p}$ is an $\sln$ contravariant symmetric-tensor-valued valuation if and only if $Z:\MPon \to \cfo$ defined by
$ZP(x):= \langle   \mu (P) , x^p  \rangle$
for every $P\in \MPon$ and $x\in \ro$, is an $\sln$ contravariant function-valued valuation.

For a Cauchy function $\xi:\R\to\R$, a map $M_{\xi}^{0,p}:\MP^n \to \symtenset{n}{p}$ is defined by
$$M_{\xi}^{0,p}(P):=\sum_{u\in \mathcal{N}_{o}(P)} \ab{\frac{u}{h_{P}(u)}}^p\xi(V_{P}(u)),~P\in \mathcal{P}^n.$$
It extends the tensor $M^{0,p}$ defined in \cite{haberl2017}.

The complete classification of symmetric-tensor-valued valuations on $\MPon$ is the following.
\begin{corollary}\label{1.6}
Let $n \ge 3$ and interger $p \ge 1$.
A map $\mu:\mathcal{P}_{o}^{n}\rightarrow \symtenset{n}{p}$ is an $\sln$ contravariant valuation if and only if there exists a Cauchy function $\xi:(0,\infty) \to \R$ such that
$$\mu(P)=M_{\xi}^{0,p}(P)$$
for every $P\in \mathcal{P}_{o}^n$.
\end{corollary}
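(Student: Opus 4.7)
The plan is to derive Corollary \ref{1.6} directly from Theorem \ref{thm0}, using the equivalence recorded before the statement: $\mu: \MPon \to \symtenset{n}{p}$ is an $\sln$ contravariant symmetric-tensor-valued valuation if and only if $ZP(x):=\langle \mu(P), x^p \rangle$ defines an $\sln$ contravariant $\cfo$-valued valuation. The only extra feature under this correspondence is that, for each $P$, $x \mapsto ZP(x)$ is a homogeneous polynomial of degree $p$. This polynomial rigidity will collapse the four summands in Theorem \ref{thm0}'s representation to the single term $\Pi_\zeta$ with $\zeta(t,s) = t^p \xi(s)$, giving $\mu(P) = M_\xi^{0,p}(P)$; the converse is then immediate from the ``if'' direction of Theorem \ref{thm0} applied to $\zeta(t,s) = t^p\xi(s)$.

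After applying Theorem \ref{thm0} to $Z$, I would eliminate the three unwanted constants in turn. Testing $P = \{o\}$ and $P = [-e_1, e_1]$: both have $\MN_o(P) = \emptyset$, so $\Pi_\zeta(P) = 0$, and $V_1(P, [-x,x]) = 0$ since $V_{n-1}(P\mid x^\perp) = 0$ for $n \ge 3$. The surviving formulas read $Z(\{o\})(x) = c_0 + c_0'$ and $Z([-e_1, e_1])(x) = c_0 - c_0'$; each must be a homogeneous polynomial of degree $p \ge 1$ in $x$, so both vanish identically, giving $c_0 = c_0' = 0$. Next I would use the standard simplex $T = \operatorname{conv}\{o, e_1, \ldots, e_n\}$, whose only facet not through the origin has outer normal $u = n^{-1/2}(1, \ldots, 1)$, and the direct computation
$ZT(x) = \zeta\bigl(x_1+\cdots+x_n, V_T(u)\bigr) + \tfrac{c_{n-1}}{n!}\bigl(|x_1+\cdots+x_n| + \sum_{j=1}^n |x_j|\bigr).$
Fixing $x_2, \ldots, x_n$ with $c := x_2+\cdots+x_n > 0$ and writing $t := x_1$, polynomiality of $ZT$ in $x_1$ yields $\zeta(t+c, V_T(u)) + \tfrac{c_{n-1}}{n!}(|t+c| + |t|) = P_c(t)$ for some polynomial $P_c$ of degree at most $p$. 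Writing the same identity at a second value $c' \neq c$ and subtracting eliminates $\zeta$ and leaves $P_c(t) - P_{c'}(t) = \tfrac{c_{n-1}}{n!}(|t+c| - |t+c'|)$; the left side is a polynomial while the right side is piecewise linear with corners at $t = -c, -c'$, forcing $c_{n-1} = 0$.

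With all three constants zero, the standard-simplex identity reads $\zeta(x_1+\cdots+x_n, V_T(u)) = \langle \mu(T), x^p\rangle$, a homogeneous polynomial of degree $p$ in $x$ depending only on the linear form $\ell(x) = x_1+\cdots+x_n$. A linear change of coordinates $y_1 = \ell(x)$, $y_j = x_j$ for $j\ge 2$ shows this polynomial equals $a y_1^p = a\ell(x)^p$ for some scalar $a$, so $\zeta(t, V_T(u)) = a t^p$. Rescaling $T$ by $\lambda > 0$ keeps $u$ fixed but sends $V_T(u) \mapsto \lambda^n V_T(u)$, sweeping out all of $(0, \infty)$; hence $\zeta(t, s) = t^p \xi(s)$ for some $\xi: (0, \infty) \to \R$, which is Cauchy because $\zeta(t, \cdot)$ is. Substituting back gives $\Pi_\zeta(P)(x) = \langle M_\xi^{0,p}(P), x^p\rangle$, i.e.\ $\mu = M_\xi^{0,p}$.

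The main obstacle is the removal of $c_{n-1}$: one must check that the non-polynomial absolute-value contributions from $V_1$ cannot be absorbed into an arbitrary continuous $\zeta$, which the two-value comparison above handles cleanly using only continuity of $\zeta$ and the polynomial nature of $ZP$. The converse direction is routine: given any Cauchy $\xi$, the choice $\zeta(t,s) = t^p\xi(s)$ satisfies the hypotheses of Theorem \ref{thm0}, so $\Pi_\zeta$ is an $\sln$ contravariant valuation on $\MPon$, and the equivalence transfers this to $M_\xi^{0,p}$.
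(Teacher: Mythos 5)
Your route is genuinely different from the paper's. The paper first records Corollary \ref{c11}, a classification of $\sln$ contravariant valuations whose values are $p$-homogeneous functions (stated without proof, by reference to the method of \cite{li2020}), and then removes the extra terms using only the parity relation $ZP(-x)=(-1)^pZP(x)$. You instead exploit the full polynomiality of $x\mapsto\langle\mu(P),x^p\rangle$ directly on top of Theorem \ref{thm0}. This is more self-contained (it does not lean on the unproved Corollary \ref{c11}), and your endgame --- pinning $\zeta(\cdot,s)$ down to $a t^p$ via the coordinate change $y_1=\ell(x)$ and sweeping $s$ over $(0,\infty)$ by dilating $T^n$ --- is correct and is essentially the content the paper delegates to Corollary \ref{c11}. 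The elimination of $c_0,c_0'$ and the converse direction are also fine.

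There is, however, one step that fails as written: in the elimination of $c_{n-1}$ you assert that subtracting the identities at $c$ and $c'$ ``eliminates $\zeta$''. It does not: the two identities are $\zeta(t+c,s)+\cdots=P_c(t)$ and $\zeta(t+c',s)+\cdots=P_{c'}(t)$, and $\zeta(t+c,s)-\zeta(t+c',s)$ has no reason to vanish, since at this stage $\zeta(\cdot,s)$ is only known to be continuous. The repair is short: solve each identity for $\zeta$ by substituting $\tau=t+c$ (resp.\ $\tau=t+c'$), so that
\begin{equation*}
\zeta(\tau,s)=P_c(\tau-c)-\tfrac{c_{n-1}}{n!}\bigl(|\tau|+|\tau-c|\bigr)+\mathrm{const}
=P_{c'}(\tau-c')-\tfrac{c_{n-1}}{n!}\bigl(|\tau|+|\tau-c'|\bigr)+\mathrm{const}',
\end{equation*}
whence $\tfrac{c_{n-1}}{n!}\bigl(|\tau-c|-|\tau-c'|\bigr)$ is a polynomial in $\tau$; it has genuine corners at $\tau=c\neq c'$ unless $c_{n-1}=0$. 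Even simpler: evaluate $Z$ on an $(n-1)$-dimensional $P\in\MPon$. Then $\mathcal{N}_o(P)=\emptyset$, so $ZP(x)=c_{n-1}V_1(P,[-x,x])+\mathrm{const}=\tfrac{2}{n}c_{n-1}V_{n-1}(P)\,|x\cdot u|+\mathrm{const}$ with $u\perp\mathrm{aff}\,P$, which is a homogeneous polynomial of degree $p\ge1$ only if $c_{n-1}=0$. With this step repaired your proof is complete.
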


Let $[P,o]$ denote the convex hull of $P$ and the origin. We also establish the complete classification of $\sln$ contravariant $\cfo$-valued valuation on $\MP^n$.

\begin{theorem}\label{thm2}
Let $n\geq 3$. A map  $Z:\mathcal{P}^n \rightarrow C(\mathbb{R}^n  \setminus \{ o \})$ is an ${\sln}$ contravariant valuation if and only if there are constants $c_{0}', c_0,\tilde{c}_0, c_{n-1}, \tilde{c}_{n-1} \in \mathbb{R}$ and two binary functions $\zeta_{1},\zeta_{2}:\R^2 \to \R$ satisfying that $\zeta_{1}(t,\cdot)$, $\zeta_{2}(t,\cdot)$ are Cauchy functions on $\R$ for every $t\in \mathbb{R}$ and ${\zeta}_{1}(\cdot,s)$, ${\zeta}_{2}(\cdot,s)$ are continuous on $\mathbb{R}$ for every $s\in \R$ such that
\begin{align*}
ZP(x)
&=\Pi_{\zeta_1}(P)(x)+ \Pi_{\zeta_2}([o,P])(x) + c_{n-1}V_1(P,[-x,x]) +\tilde{c}_{n-1}V_{1}([o,P],[-x,x]) \\
&\quad  +c_0V_0(P) + c_0'(-1)^{\dim P}V_{0}(o \cap \relint P)+ \tilde{c}_{0}  V_{0}( o \cap  P)
\end{align*}
for every $P\in\mathcal{P}_o^n$ and $x\in \mathbb{R}^n \setminus \{o\}$.
\end{theorem}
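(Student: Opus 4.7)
The plan is to reduce Theorem \ref{thm2} to Theorem \ref{thm0} by exploiting the cone construction $P\mapsto[o,P]$, which maps $\MP^n$ into $\MPon$ and restricts to the identity on $\MPon$.

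For sufficiency, each summand on the right-hand side is checked to be an $\sln$ contravariant valuation on $\MP^n$. The only nonstandard verifications concern $\Pi_{\zeta_2}([o,P])$ and $V_1([o,P],[-x,x])$, for which I would first establish the following geometric lemma: whenever $K,L\in\MP^n$ with $K\cup L\in\MP^n$, one has
\[
[o,K\cup L]=[o,K]\cup[o,L]\quad\text{and}\quad[o,K\cap L]=[o,K]\cap[o,L].
\]
The first identity is immediate from convexity of $K\cup L$. For the second, given $p\in[o,K]\cap[o,L]\setminus\{o\}$, the ray from $o$ through $p$ meets both $K$ and $L$ at parameters $\ge 1$ (where $p$ has parameter $1$); convexity of $K\cup L$ forces these two intersections to be overlapping subsegments of the ray, and their overlap at parameter $\ge 1$ yields a point of $K\cap L$ witnessing $p\in[o,K\cap L]$. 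Since $\phi[o,P]=[o,\phi P]$ for $\phi\in\sln$, the lemma promotes the valuation and contravariance of $\Pi_{\zeta_2}$ and $V_1(\cdot,[-x,x])$ on $\MPon$ to the corresponding statements for their compositions with $[o,\cdot]$ on $\MP^n$; the remaining term $V_0(o\cap P)$ is a standard valuation.

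For necessity, let $Z:\MP^n\to\cfo$ be $\sln$ contravariant. Restricting to $\MPon$ and applying Theorem \ref{thm0} gives a Cauchy-type function $\zeta$ and constants $C_{n-1},C_0,c_0'$ such that
\[
Z(P)=\Pi_\zeta(P)+C_{n-1}V_1(P,[-x,x])+C_0V_0(P)+c_0'(-1)^{\dim P}V_0(o\cap\relint P)
\]
for every $P\in\MPon$. By the lemma above, $Y(P):=Z([o,P])$ is an $\sln$ contravariant valuation on $\MP^n$ agreeing with $Z$ on $\MPon$, and the displayed formula (with $P$ replaced by $[o,P]$) provides an explicit description of $Y$ on all of $\MP^n$. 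Thus $U:=Z-Y$ is an $\sln$ contravariant valuation on $\MP^n$ with $U|_{\MPon}\equiv 0$, and Theorem \ref{thm2} reduces to classifying such ``defect'' valuations.

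The main obstacle is this residual classification. For $P\in\MP^n\setminus\MPon$, the valuation identity applied with any $Q\in\MPon$ satisfying $P\cup Q\in\MPon$ forces $U(P)=U(P\cap Q)$, since $U$ annihilates $\MPon$; choosing $Q$ to be slices of $[o,P]$ by hyperplanes through $o$, I can peel off $U$'s contributions facet by facet of $P$. The $\sln$ contravariance then restricts the dependence on each facet with outer normal $u$ to a function of the normalized support value $x\cdot u/h_P(u)$ and the cone-volume weight $V_P(u)$, together with a separate contribution depending only on whether $o\in P$ and on the projection function of $P$ itself. Following the pattern of Theorem \ref{thm0}'s proof, the resulting functional equations force Cauchy behavior in $V_P(u)$ and continuity in $x\cdot u/h_P(u)$, producing a Cauchy-type function $\zeta_1$ and constants $c_{n-1},\tilde c_0$, with $\zeta_2:=\zeta-\zeta_1$, $\tilde c_{n-1}:=C_{n-1}-c_{n-1}$ and $c_0:=C_0-\tilde c_0$ determined by subtraction. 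Assembling $Z=Y+U$ with these identifications yields the stated formula; the technical heart is extracting $\zeta_1$ without invoking measurability, accomplished by testing $U$ against one-parameter families of simplices mirroring the analogous step in the proof of Theorem \ref{thm0}.
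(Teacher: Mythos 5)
Your sufficiency argument and your first reduction are sound and parallel the paper: the identities $[o,K\cup L]=[o,K]\cup[o,L]$ and $[o,K\cap L]=[o,K]\cap[o,L]$ are exactly what the paper uses in Lemma \ref{lm0.4}, and setting $Y(P):=Z([o,P])$ and $U:=Z-Y$ does correctly reduce the necessity direction to classifying an $\sln$ contravariant valuation $U$ on $\MP^n$ that vanishes on $\MPon$.

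That residual classification, however, is where the entire new content of Theorem \ref{thm2} lives, and your sketch of it has a genuine gap. The peeling identity $U(P)=U(P\cap Q)$ requires $Q\in\MPon$ with $P\cup Q$ convex and containing $o$; for $P$ disjoint from the origin this is extremely restrictive --- the natural choice $Q=[o,P]$ yields only the tautology $U(P)=U(P)$, and a slice of $[o,P]$ by a halfspace through $o$ does not in general form a convex union with $P$. More importantly, you never actually compute $Z$ on any polytope not containing the origin, and the final ``subtraction'' defining $\zeta_1$, $c_{n-1}$, $\tilde c_0$ presupposes that $U$ already has the desired form. The paper supplies two specific devices that are missing from (and not substituted for in) your proposal: (i) Lemma \ref{lm0.3}, by which an $\sln$ contravariant valuation on $\MP^n$ is determined by its values on the two families $sT^d$ and $s[e_1,\dots,e_d]$ for $s>0$ and $0\le d\le n$; and (ii) the observation that $\check{Z}(T):=Z(T')$, where $T'$ is the facet of $T\in\MTon$ opposite the origin, is again an $\sln$ contravariant valuation on $\MTon$, so that Theorem \ref{thm0aa} can be applied a second time to produce an explicit formula for $Z(s[e_1,\dots,e_d])$ with a second Cauchy-type function $f_2$. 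The two resulting representations are then matched against the Ansatz by bookkeeping. Without (i) and (ii), or equivalents, the necessity direction is not established.
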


As in the case of $\MPon$, we can also assume further that valuations are measurable so that
\begin{align*}
\Pi_{\zeta_{1}}(P)+ \Pi_{\zeta_{2}}([o,P])
=\sum\limits_{u\in \mathcal{N}_o(P)} \eta_{1}  \left(\frac{x\cdot u}{h_P(u)}\right) V_P(u)
   + \sum\limits_{u\in \mathcal{N}_o([o,P])} \eta_{2}   \left(\frac{x\cdot u}{h_{[o,P]}(u)}\right) V_{[o,P]}(u)
\end{align*}
for some continuous functions $\eta_1,\eta_2:\R \to \R$.
That concludes the ``only if" part of Theorem 1.4 in \cite{li2020}.

The rest of the paper is organized as follows.
Some notation and background are introduced in the second section.
We prove the map $\Pi_\zeta$ is an $\sln$ contravariant valuation in the third section.
In the last two sections, we prove the main results and corollaries and we remark that Lemma \ref{lm6} is a key improvement over \cite{li2020}.

\section{Notation and Preliminaries}

We refer to Schneider \cite{schneider2014} and Klain \cite{klain1997} as some general references for convex geometry and valuation theory.

We work in Euclidean space $\R^n$ and the inner product of $x,y \in \R^n$ is denoted by $x \cdot y$.
Let $[A_1,...,A_i]$ denote the convex hull of the sets $A_1,...,A_i$ in $\mathbb{R}^n$.
We allow $A_i$ to be a singleton $\{x\}$ and briefly write $x$ instead of $\{x\}$ in this case.

The \emph{support function} of a convex body $K$ is defined by
$h_{K}(x):=\max\{x\cdot y: y\in K   \}$ for $x\in \mathbb{S}^{n-1}$.
It is easy to see that support functions are convex functions and homogeneous of degree 1.
Support functions are important tools in convex geometry since a convex body is identified with its support function. The \emph{Hausdorff distance} of $K$ and $L$ is $\max_{u\in \mathbb{S}^{n-1}}|h_K(u)-h_L(u)|$. Hence, $K_i \rightarrow  K$ with respect to the Hausdorff metric if and only if $h_{K_i} \rightarrow h_K$ uniformly on $\mathbb{S}^{n-1}$.

We call a valuation \emph{simple} if it vanishes on lower-dimensional convex bodies.

Let $P \in \mathcal{P}^n$ and $\eta \subseteq \mathbb{S}^{n-1}$.
The \emph{reverse Gauss map} is
$ {\nu}_{P}^{-1}  (\eta)=\{x\in  P: x\in H_{v,h_{P}(v)} \text{  for some  } v\in \eta     \} \subset \partial P.$
The surface area measure $S_{P}(\eta)=\mathcal{H}^{n-1}(\nu_{P}^{-1}(\eta))$ for any Borel set $\eta \subseteq  \mathbb{S}^{n-1}$, where $\mathcal{H}^{n-1}$ is the $(n-1)$-dimensional Hausdorff measure.
Precisely, $S_{P}=\sum_{u \in \MN(P)}a_{u}\delta_{u}$, where $a_{u}$ is the area of facets of $P$ orthogonal to $u$, and $\delta_{u}$ is the Dirac measure on $\mathbb{S}^{n-1}$ concentrated on the point $u_{i}$.

To prove the ``only if" parts of our main results, we require the following Lemma \ref{lm2} and \ref{lm0.3} in \cite{li2020}.

\begin{lemma}\label{lm2}
Let $Z,Z'$ be two $\sln$ contravariant function-valued valuations on $\mathcal{P}_{o}^n$. If
$Z(sT^d)=Z'(sT^d)$ for every $s>0$ and $0\leq d \leq n$, then $ZP=Z'P$ for every $P\in \mathcal{P}_{o}^n$.
\end{lemma}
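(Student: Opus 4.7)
The plan is to set $\tilde{Z}:=Z-Z'$, which is again an $\sln$ contravariant function-valued valuation on $\MPon$ and which by hypothesis vanishes on $sT^d$ for every $s>0$ and every $0\le d\le n$. Showing $\tilde{Z}\equiv 0$ on $\MPon$ will conclude the lemma. I would proceed in two stages: first promote the vanishing from scaled standard simplices to every simplex in $\MPon$ having $o$ as a vertex by exploiting $\sln$-contravariance, then extend to arbitrary polytopes in $\MPon$ through triangulation and repeated use of the valuation relation.

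For the first stage, fix a $d$-simplex $S=[o,v_1,\dots,v_d]\in\MPon$ with $o$ as a vertex. For a suitably chosen $s>0$ and a linear extension of $v_1,\dots,v_d$ to a basis $v_1,\dots,v_n$ of $\R^n$ with appropriately scaled extending vectors so that $\det(v_1/s,\dots,v_n/s)=1$, the linear map $\phi$ determined by $\phi(se_i)=v_i$ for $1\le i\le n$ lies in $\sln$ and sends $sT^d$ onto $S$. The $\sln$-contravariance of $\tilde{Z}$ then yields
\[
\tilde{Z}(S)(x)=\tilde{Z}(\phi(sT^d))(x)=\tilde{Z}(sT^d)(\phi^{-1}x)=0
\]
for every $x\in\ro$.

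For the second stage, I would invoke the standard cone-from-$o$ triangulation: every $P\in\MPon$ admits a decomposition $P=\bigcup_{i=1}^m[o,F_i]$, where the $F_i$ are the simplices in a triangulation of the facets of $P$ whose affine hulls do not contain $o$. Each $[o,F_i]$ is a simplex with vertex $o$, lies in $\MPon$, and any intersection $[o,F_{i_1}]\cap\dots\cap[o,F_{i_k}]=[o,F_{i_1}\cap\dots\cap F_{i_k}]$ is again a cone from $o$ over a common (possibly lower-dimensional) face, hence also lies in $\MPon$. Iterating the valuation identity $\tilde{Z}(A\cup B)=\tilde{Z}(A)+\tilde{Z}(B)-\tilde{Z}(A\cap B)$ expands $\tilde{Z}(P)$ by inclusion-exclusion as a signed sum of values of $\tilde{Z}$ on such cones; triangulating each lower-dimensional cone further (an inner induction on dimension) reduces every summand to simplices with vertex $o$, on which Stage one applies, so $\tilde{Z}(P)=0$.

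The main obstacle is the bookkeeping in Stage two: each union and each intersection produced along the iterated inclusion-exclusion must remain inside $\MPon$, and every dimensionally degenerate intersection must still be triangulable into simplices having $o$ as a vertex so that Stage one applies. Both points are automatic from the cone-from-$o$ structure together with a standard induction on dimension of the pieces, since the class of cones from $o$ is closed under intersection.
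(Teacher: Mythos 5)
Your Stage one is correct: every $d$-simplex $[o,v_1,\dots,v_d]$ is the image of some $sT^d$ under a map in $\sln$ (for $d=n$ the scale $s$ is forced by $s^n=|\det(v_1,\dots,v_n)|$ and one reorders the $v_i$ if necessary to make the determinant positive), so $\tilde Z$ vanishes on all of $\mathcal{T}_o^n$. Note that the paper does not prove this lemma itself but quotes it from \cite{li2020}; the reduction to simplices with a vertex at $o$ there is exactly your Stage one.

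The genuine gap is in Stage two. The identity $\tilde Z(A\cup B)=\tilde Z(A)+\tilde Z(B)-\tilde Z(A\cap B)$ is only available when $A$, $B$, $A\cup B$ and $A\cap B$ all belong to $\MPon$, i.e.\ are \emph{convex} polytopes containing the origin. In your iterated expansion the set $A=\bigcup_{i<k}[o,F_i]$ is a partial union of cones over a facet triangulation, and such unions are essentially never convex (for a cube with $o$ in its interior, the union of the cones over any two facets is already non-convex); hence $\tilde Z(A)$ is not even defined and the identity cannot be iterated. Your closing remark that the required closure properties are ``automatic from the cone-from-$o$ structure'' is precisely where the argument breaks: cones from $o$ are closed under intersection but not under union. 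What you are implicitly invoking is the inclusion--exclusion principle for valuations over dissections, which for a valuation defined only on the subfamily $\MPon$ is a nontrivial extension theorem and is, in substance, the content of the lemma being proved. The standard repair (and the route taken in \cite{li2020} and its antecedents) is to cut by hyperplanes through the origin: for $H\ni o$ the sets $P\cap H^+$, $P\cap H^-$, $P\cap H$ are all convex polytopes containing $o$, so
\begin{equation*}
\tilde Z(P)=\tilde Z(P\cap H^+)+\tilde Z(P\cap H^-)-\tilde Z(P\cap H)
\end{equation*}
is a legitimate instance of the valuation property; a double induction on $\dim P$ and on the number of facets of $P$ whose affine hulls do not contain $o$ then reduces everything to simplices in $\mathcal{T}_o^n$, where Stage one applies.
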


\begin{lemma}\label{lm0.3}
Let $Z,Z'$ be two $\sln$ contravariant function-valued valuations on $\mathcal{P}^n$. If
$Z(sT^d)=Z'(sT^d)$ and $Z(s[e_1\dots,e_d])=Z'(s[e_1\dots,e_d])$ for every $s>0$ and $0\leq d \leq n$, then $ZP=Z'P$ for every $P\in \mathcal{P}^n$.
\end{lemma}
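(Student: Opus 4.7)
Set $W := Z - Z'$. Then $W$ is an $\sln$-contravariant valuation on $\MP^n$ vanishing on $sT^d$ and $s[e_1,\dots,e_d]$ for every $s > 0$ and $0 \le d \le n$, and the aim is to deduce $W \equiv 0$ on $\MP^n$. The plan is first to reduce the claim to the simplicial case via triangulation and inclusion-exclusion, and then to split the simplicial case by the position of the origin: Lemma~\ref{lm2} handles simplices containing $o$, $\sln$-contravariance together with the second family of hypotheses handles simplices with $o \notin \operatorname{aff}(S)$, and a signed Radon-type decomposition handles the remaining case $o \in \operatorname{aff}(S) \setminus S$.

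First, the restriction $W|_{\MPon}$ is an $\sln$-contravariant valuation on $\MPon$ that vanishes on every $sT^d$, so Lemma~\ref{lm2} yields $W \equiv 0$ on $\MPon$. Since every polytope admits a simplicial subdivision and $W$ extends to the convex ring by Groemer's integral theorem, $W$ on $\MP^n$ is determined by its values on simplices; hence it suffices to verify $WS = 0$ for every simplex $S \in \MP^n$. I argue by induction on $d = \dim S$. The base $d = 0$ is immediate, since $\{o\} \in \MPon$, and any singleton $\{x\}$ with $x \ne o$ is $\sln$-equivalent to $[e_1]$ (because $\sln$ acts transitively on $\ro$), so $W\{x\} = W([e_1]) = 0$ by hypothesis.

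For the inductive step on $S = [v_0, \dots, v_d]$, three subcases arise. If $o \in S$ then $S \in \MPon$ and $WS = 0$. If $o \notin \operatorname{aff}(S)$, the $d + 2$ points $o, v_0, \dots, v_d$ are affinely independent, so $v_0, \dots, v_d$ are linearly independent; hence some $\phi \in \sln$ maps $S$ onto $s[e_1, \dots, e_{d+1}]$ for a suitable $s > 0$ (with $s = 1$ when $d + 1 < n$, and $s = |\det(v_0,\dots,v_{n-1})|^{1/n}$ when $d + 1 = n$, possibly after a coordinate swap to fix the determinant sign), so $WS = 0$. The essential case is $o \in \operatorname{aff}(S) \setminus S$: writing $o = \sum_i \lambda_i v_i$ with $\sum_i \lambda_i = 1$ and some $\lambda_i < 0$, I form the $d$-simplices $T_i := [o, v_0, \dots, \hat{v_i}, \dots, v_d]$ for each $i$ with $\lambda_i \ne 0$; each $T_i \in \MPon$, so $WT_i = 0$. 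Applying inclusion-exclusion to the polytope $[o, S] \in \MPon$ (triangulated as a cone from $o$ through the facets of $S$ visible from $o$) yields a signed decomposition of $\chi_S$ in terms of the $\chi_{T_i}$ and characteristic functions of lower-dimensional faces; evaluating $W$ on this identity and invoking the inductive hypothesis on the lower-dimensional terms forces $WS = 0$.

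The principal obstacle will be making the signed decomposition in this last subcase precise: concretely it is the visibility triangulation of $[o, S]$ from $o$, in which facets of $S$ visible from $o$ produce the $T_i$ with one sign and invisible ones with the opposite sign, with lower-dimensional corrections arising from facet intersections. Once that combinatorial bookkeeping is in place---equivalent to a Radon partition of $\{o, v_0, \dots, v_d\}$ or a direct computation inside $[o, S]$---the induction closes and one concludes $W \equiv 0$ on $\MP^n$.
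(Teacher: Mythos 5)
The paper itself does not prove this lemma---it is imported from \cite{li2020}---but your argument reconstructs essentially the proof given there (in the tradition of Ludwig--Reitzner): reduce to simplices via triangulation and inclusion--exclusion, dispatch simplices containing $o$ with Lemma \ref{lm2}, dispatch simplices with $o\notin\operatorname{aff}(S)$ using the second family of hypotheses together with $\sln$ contravariance, and treat $o\in\operatorname{aff}(S)\setminus S$ by decomposing $[o,S]$ from the origin. Two remarks. First, the extension result you need is the inclusion--exclusion principle for arbitrary abelian-group-valued (not necessarily continuous) valuations on $\MP^n$ (Volland, Perles--Sallee; Groemer's extension theorem in its polytope version), rather than Groemer's integral theorem for continuous valuations on $\MK^n$. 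Second, the step you defer does close cleanly and needs no extra combinatorial input: writing $F_1,\dots,F_k$ for the facets of $S$ visible from $o$, one has $[o,S]=S\cup[o,F_1]\cup\dots\cup[o,F_k]$; each $[o,F_i]$ is a $d$-simplex in $\MPon$ (since $o\notin\operatorname{aff}(F_i)$), and every intersection term in the inclusion--exclusion expansion either contains $o$ (so $W$ vanishes on it by the $\MPon$ case) or equals a proper face $\bigcap_{i\in I}F_i$ of $S$ (so $W$ vanishes on it by the induction on dimension); hence $0=W([o,S])=W(S)$. With that bookkeeping written out, the proof is complete and agrees with the cited one.
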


\section{The new valuation $\Pi_{\zeta}$.}
We show that $\Pi_{\zeta}$ is a simple and ${\rm SL}(n)$ contravariant $C(\mathbb{R}^{n})$-valued valuation.
Recall that the $\Pi_{\zeta}$ is defined as the following.
\begin{equation*}
	\Pi_{\zeta}P(x):=\sum\limits_{u\in \mathcal{N}_o(P)} \zeta \left( \frac{x\cdot u}{h_P(u)},V_P(u)\right)
	\end{equation*}
for every $P\in \mathcal{P}^{n}$ and $x\in \mathbb{R}^n$, where $\zeta: {\mathbb{R}^2 \rightarrow \mathbb{R}}$ satisfies that $\zeta(t,\cdot)$ is a Cauchy function on $\R$ for every $t\in \mathbb{R}$ and $\zeta(\cdot,s)$ is continuous on $\mathbb{R}$ for $s\in \R$.
\begin{lemma}\label{thm4}
The function-valued map $\Pi_{\zeta}$ is a simple and $\sln$ contravariant, $C(\mathbb{R}^{n})$-valued valuation on $\mathcal{P}^n$.
\end{lemma}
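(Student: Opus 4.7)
The plan is to verify four properties in turn: (i) each $\Pi_\zeta P$ lies in $C(\R^n)$, (ii) $\Pi_\zeta$ is simple, (iii) $\Pi_\zeta$ is $\sln$ contravariant, and (iv) $\Pi_\zeta$ satisfies the valuation identity. The first two are immediate: for any $u\in\MN_o(P)$, the condition that the affine hull of the corresponding facet avoids $o$ is precisely $h_P(u)\neq 0$, so each summand $\zeta\bigl(x\cdot u/h_P(u),V_P(u)\bigr)$ is continuous in $x$ on all of $\R^n$ by the continuity of $\zeta(\cdot,s)$, and the sum is finite; and any lower-dimensional $P$ has no $(n-1)$-dimensional face, so $\MN_o(P)=\emptyset$ and $\Pi_\zeta P\equiv 0$.

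For (iii), fix $\phi\in\sln$. I would use the bijection $\MN_o(P)\to\MN_o(\phi P)$ given by $v\mapsto u:=\phi^{-t}v/|\phi^{-t}v|$ and verify termwise that the two arguments of $\zeta$ transform correctly. Using $h_{\phi P}(u)=h_P(\phi^t u)=h_P(v)/|\phi^{-t}v|$ gives $x\cdot u/h_{\phi P}(u)=(\phi^{-1}x)\cdot v/h_P(v)$. For the cone-volume factor, the standard area transformation for facets under $\phi\in\sln$ reads $a_u=|\phi^{-t}v|\,a_v$, so $V_{\phi P}(u)=\tfrac{1}{n}h_{\phi P}(u)a_u=\tfrac{1}{n}h_P(v)a_v=V_P(v)$. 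Substituting, the defining sum for $\Pi_\zeta(\phi P)(x)$ becomes the defining sum for $\Pi_\zeta(P)(\phi^{-1}x)$.

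The main obstacle is (iv). Given $P,Q\in\MP^n$ with $P\cup Q\in\MP^n$, the essential case is when both $P,Q$ are full-dimensional with neither contained in the other; then there is a hyperplane $H=\{y\cdot u_0=c\}$ with $P=(P\cup Q)\cap H^+$, $Q=(P\cup Q)\cap H^-$, and $P\cap Q=(P\cup Q)\cap H$, which is at most $(n-1)$-dimensional, so $\Pi_\zeta(P\cap Q)=0$ by (ii). I would then match the sums direction by direction. For $u\neq\pm u_0$ in $\MN_o(P\cup Q)$, the facet is split by $H$ into a $P$-piece and a $Q$-piece with the common support value $h_{P\cup Q}(u)=h_P(u)=h_Q(u)$ and additive area, yielding $V_{P\cup Q}(u)=V_P(u)+V_Q(u)$ (interpreting $V_R(u)=0$ whenever $u\notin\MN_o(R)$, which is consistent with $\zeta(t,0)=0$ from the Cauchy property); then $\zeta(t,V_P(u)+V_Q(u))=\zeta(t,V_P(u))+\zeta(t,V_Q(u))$ makes the $u$-contributions agree. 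The subtle terms are at $u=\pm u_0$: the facet of $P$ on $H$ with outer normal $-u_0$ and that of $Q$ with normal $u_0$ are facets of $P$ and $Q$ but not of $P\cup Q$. Direct computation gives $h_P(-u_0)=-c$, $h_Q(u_0)=c$, and a common facet area $a$, so $V_P(-u_0)=-V_Q(u_0)$ (the signed cone-volume is key here) and $x\cdot(-u_0)/h_P(-u_0)=x\cdot u_0/h_Q(u_0)$; the two orphan terms therefore equal $\zeta(t_0,s_0)+\zeta(t_0,-s_0)=0$ by the Cauchy hypothesis on $\zeta(t_0,\cdot)$. When $c=0$ these orphan facets lie in $\{h=0\}$ and hence lie outside $\MN_o$ on both sides, so they contribute to neither sum and no cancellation is needed. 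The degenerate cases (one of $P,Q$ lower-dimensional or one contained in the other) reduce immediately using (ii). The main difficulty throughout is the careful bookkeeping that the Cauchy property of $\zeta(t,\cdot)$ is used in two distinct ways: additivity across the split facets and sign-reversal cancellation of the facets lying on $H$.
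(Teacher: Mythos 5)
Your parts (i) and (iii) are correct and essentially the paper's argument. Part (ii) has a flawed justification: under the convention forced by the surface area measure $S_P(\eta)=\mathcal{H}^{n-1}(\nu_P^{-1}(\eta))$, an $(n-1)$-dimensional polytope $P$ lying in a hyperplane with unit normal $u$ has $\mathcal{N}(P)=\{u,-u\}$ with equal facet areas, not $\mathcal{N}(P)=\emptyset$. So $\mathcal{N}_o(P)$ need not be empty; simplicity instead follows from $h_P(-u)=-h_P(u)$, hence $V_P(-u)=-V_P(u)$ and $\frac{x\cdot u}{h_P(u)}=\frac{x\cdot(-u)}{h_P(-u)}$, so the two summands are $\zeta(t,s)+\zeta(t,-s)=0$ by the Cauchy property --- exactly the cancellation you invoke later for the ``orphan'' facets. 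The conclusion of (ii) stands, but for a different reason than you give, and this matters where you use (ii) inside (iv).

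The serious gap is the reduction in (iv). It is false that whenever $P,Q$ are full-dimensional, neither contains the other, and $P\cup Q$ is convex, the pair arises from a hyperplane split of $P\cup Q$ with $P\cap Q$ at most $(n-1)$-dimensional. For example, take $P=[0,1]^n$ and $Q=[0,1]^{n-1}\times[1/2,3/2]$: then $P\cup Q=[0,1]^{n-1}\times[0,3/2]$ is convex, $P\cap Q=[0,1]^{n-1}\times[1/2,1]$ is full-dimensional, and no hyperplane $H$ satisfies $P=(P\cup Q)\cap H^{+}$ and $Q=(P\cup Q)\cap H^{-}$. In such configurations $\Pi_\zeta(P\cap Q)$ is not zero in general and your case analysis says nothing. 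The paper avoids any reduction: it partitions $\mathbb{S}^{n-1}$ into $\omega_1=\{h_P=h_Q\}$, $\omega_2=\{h_P>h_Q\}$, $\omega_3=\{h_P<h_Q\}$, uses $h_{P\cup Q}=\max(h_P,h_Q)$ and $h_{P\cap Q}=\min(h_P,h_Q)$ together with the corresponding identities for the reverse Gauss maps, obtaining $V_{P\cup Q}+V_{P\cap Q}=V_P+V_Q$ on $\omega_1$ (where all four support values coincide, so the Cauchy property applies termwise) and direct equality of the summands on $\omega_2$ and $\omega_3$. You would need either to adopt that uniform argument or to supply a genuine reduction to hyperplane splits (e.g., via an extension theorem for weakly additive functionals), neither of which your proposal does.
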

\begin{proof}
Firstly, we prove that $\Pi_{\zeta}$ is a valuation. For $P,Q,P\cup Q \in \mathcal{P}^n$, we divide $\mathbb{S}^{n-1}$ into three parts:
$\omega_1=\{u\in \mathbb{S}^{n-1}:h_{P}(u)=h_{Q}(u)\}$, $\omega_2=\{u\in \mathbb{S}^{n-1}:h_{P}(u)>h_{Q}(u)\}$, $\omega_3=\{u\in \mathbb{S}^{n-1}:h_{P}(u)<h_{Q}(u)\}.$	
For any Borel set $\eta_i\subset \omega_i$, $i=1,2,3$, we have
$$\nu^{-1}_{P\cup Q}(\eta_1)=\nu^{-1}_{P}(\eta_1)\cup \nu^{-1}_{Q}(\eta_1),~\nu^{-1}_{P\cap Q}(\eta_1)=\nu^{-1}_{P}(\eta_1)\cap \nu^{-1}_{Q}(\eta_1),$$
$$\nu^{-1}_{P\cup Q}(\eta_2)=\nu^{-1}_{P}(\eta_2),~\nu^{-1}_{P\cap Q}(\eta_2)=\nu^{-1}_{Q}(\eta_2),$$
$$\nu^{-1}_{P\cup Q}(\eta_3)=\nu^{-1}_{Q}(\eta_3),\nu^{-1}_{P\cap Q}(\eta_3)=\nu^{-1}_{P}(\eta_3),$$
since $h_{P\cap Q}=\min\{h_{P},~h_{Q}\},~h_{P\cup Q}=\max\{h_{P},~h_{Q}\}$.
Then we have
	\begin{align*}
	&V_{P\cup Q}(\eta_1)+V_{P\cap Q}(\eta_1)=V_{P}(\eta_1)+V_{Q}(\eta_2),\\
	&V_{P\cup Q}(\eta_2)=V_{P}(\eta_2),~V_{P\cap Q}(\eta_2)=V_{Q}(\eta_2),\\
	&V_{P\cup Q}(\eta_3)=V_{Q}(\eta_3),~V_{P\cap Q}(\eta_3)=V_{P}(\eta_3),
	\end{align*}
	and
	$$\begin{aligned}
  &\mathcal{N}_o(P\cup Q)\cap \omega_2= \mathcal{N}_o(P)\cap \omega_2,~~
\mathcal{N}_o(P\cap Q)\cap \omega_2= \mathcal{N}_o(Q)\cap \omega_2,\\
	& \mathcal{N}_o(P\cup Q)\cap \omega_3= \mathcal{N}_o(Q)\cap \omega_3,~~
	 \mathcal{N}_o(P\cap Q)\cap \omega_3= \mathcal{N}_o(P)\cap \omega_3.
	\end{aligned}$$
 Thus, we get for $\omega_1$ (note that $\zeta(t,0)=0$) that,
$$\begin{aligned}
	&\sum\limits_{u\in   \mathcal{N}_o(P\cup Q)\cap \omega_1} \zeta \left( \frac{x\cdot u}{h_{P\cup Q}(u)},V_{P\cup Q}(u)\right)
	+\sum\limits_{u\in  \mathcal{N}_o(P\cap Q)\cap \omega_1} {\zeta} \left( \frac{x\cdot u}{h_{P\cap Q}(u)},V_{P\cap Q}(u)\right)
	\\
	&=\sum\limits_{u\in  \mathcal{N}_o(P) \cap\omega_1} {\zeta}\left( \frac{x\cdot u}{h_{P}(u)},V_{P}(u)\right) +\sum\limits_{u\in  \mathcal{N}_o(Q)\cap\omega_1}{\zeta}\left( \frac{x\cdot u}{h_{Q}(u)},V_{Q}(u)\right);\end{aligned}$$
and for $\omega_2$ and $\omega_3$ that
\begin{align*}
&\sum\limits_{u\in  \mathcal{N}_o(P\cup Q) \cap \omega_2} {\zeta} \left( \frac{x\cdot u}{h_{P\cup Q}(u)},V_{P\cup Q}(u)\right)=\sum\limits_{u\in  \mathcal{N}_o(P)\cap \omega_2} {\zeta}  \left( \frac{x\cdot u}{h_{P}(u)},V_{P}(u)\right), \\
&\sum\limits_{u\in  \mathcal{N}_o(P\cap Q)\cap \omega_2} {\zeta} \left( \frac{x\cdot u}{h_{P\cap Q}(u)},V_{P\cap Q}(u)\right)=\sum\limits_{u\in  \mathcal{N}_o(Q)\cap\omega_2} {\zeta} \left( \frac{x\cdot u}{h_{Q}(u)},V_{Q}(u)\right),\\
&\sum\limits_{u\in  \mathcal{N}_o(P\cup Q)\cap \omega_3} {\zeta}  \left( \frac{x\cdot u}{h_{P\cup Q}(u)},V_{P\cup Q}(u)\right)=\sum\limits_{u\in  \mathcal{N}_o(Q)\cap\omega_3}{\zeta} \left( \frac{x\cdot u}{h_{Q}(u)},V_{Q}(u)\right),\\
&\sum\limits_{u\in  \mathcal{N}_o(P\cap Q)\cap \omega_3}  {\zeta}  \left( \frac{x\cdot u}{h_{P\cap Q}(u)},V_{P\cap Q}(u)\right)=\sum\limits_{u\in  \mathcal{N}_o(P)\cap \omega_3} {\zeta}  \left( \frac{x\cdot u}{h_{P}(u)},V_{P}(u)\right).
\end{align*}
All together we get that $\Pi_{\zeta}$ is a valuation.
	
Secondly, let $\phi\in \operatorname{SL}(n), ~P\in\mathcal{P}_o^n$ and $x \in \mathbb{R}^n\setminus \{o\}$. Note that for any $u\in S^{n-1}$ we have $h_{\phi P}(u)=h_{P}(\phi^t u)$ and $V_{\phi P}(u)=V_{P}(\frac{\phi^t u}{|\phi^t u|}) $. Let $v=\frac{\phi^{t}u}{|\phi^{t}u|}$, then we have
	\begin{equation*}
	\begin{aligned}
	\Pi_{{\zeta}}(\phi P)(x)
	&=\sum\limits_{u\in  \mathcal{N}_o(\phi P)} {\zeta} \left( \frac{x\cdot u}{h_{\phi P}(u)},V_{\phi P}(u)\right) =\sum\limits_{u\in  \mathcal{N}_o(\phi P)} {\zeta} \left( \frac{\phi ^{-1}x\cdot \phi ^{t}u}{h_{P}(\phi^t u)},V_{P}\big(\frac{\phi^t u}{|\phi^t u|}\big)\right) \\
	&=\sum\limits_{v\in  \mathcal{N}_o(P)}  {\zeta} \left( \frac{\phi ^{-1}x\cdot v}{h_{P}(v)},V_{P}(v)\right)
=\Pi_{{\zeta}} P(\phi^{-1}x).
	\end{aligned}
	\end{equation*}	
	
Finally, since $\zeta(\cdot,s)$ is continuous for $s\in \R$, we obtain $\Pi_{\zeta}P\in  C(\mathbb{R}^n)$.
The simplicity of $\Pi_\zeta$ follows directly from the definition and $\zeta \left(r,s\right)+\zeta(r,-s)=0$ for every $r,s \in \R$.
\end{proof}

Let $P\in \mathcal{P}^{n}$ and $x\in \mathbb{R}^n$. Define
  $\tilde{\Pi}_{\zeta}P(x):=  \Pi_\zeta([P,o])(x).$

\begin{lemma}\label{lm0.4}
The function-valued map $\tilde{\Pi}_{\zeta}$ is an $\sln$ contravariant $C(\mathbb{R}^n)$-valued valuation on $\mathcal{P}^n$.
\end{lemma}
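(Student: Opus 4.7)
The plan is to deduce the three properties of $\tilde{\Pi}_{\zeta}$ directly from the corresponding properties of $\Pi_\zeta$ established in Lemma \ref{thm4}, by exploiting the definition $\tilde{\Pi}_\zeta(P) = \Pi_\zeta([P, o])$. The key step is to show that for $P, Q, P \cup Q \in \MP^n$, the operation $P \mapsto [P,o]$ commutes with the valuation-theoretic lattice operations:
\[
[P \cup Q, o] = [P, o] \cup [Q, o], \qquad [P \cap Q, o] = [P, o] \cap [Q, o].
\]
I would verify both via support functions. Starting from the elementary identity $h_{[A, o]}(u) = \max(h_A(u), 0) =: h_A^+(u)$ for any $A \in \MP^n$, the union identity follows from $h_{P \cup Q} = \max(h_P, h_Q)$, which holds because $P \cup Q$ is convex. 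For the intersection identity, I would combine the standard valuation identity for support functions $h_{[P, o]} + h_{[Q, o]} = h_{[P, o] \cup [Q, o]} + h_{[P, o] \cap [Q, o]}$ (valid since the union is convex by the previous step) with $h_{P \cap Q} = \min(h_P, h_Q)$ (itself a consequence of $P \cup Q$ being convex). A short case split on the sign of $h_P(u) - h_Q(u)$ then gives $h_{[P,o] \cap [Q,o]}(u) = \min(h_P(u), h_Q(u))^+ = h_{[P \cap Q, o]}(u)$, so the two convex bodies coincide.

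With these two identities in hand, the valuation property of $\tilde{\Pi}_\zeta$ follows immediately by applying Lemma \ref{thm4} to $[P, o]$ and $[Q, o]$:
\[
\tilde{\Pi}_\zeta(P) + \tilde{\Pi}_\zeta(Q) = \Pi_\zeta([P, o]) + \Pi_\zeta([Q, o]) = \Pi_\zeta([P \cup Q, o]) + \Pi_\zeta([P \cap Q, o]) = \tilde{\Pi}_\zeta(P \cup Q) + \tilde{\Pi}_\zeta(P \cap Q).
\]
For $\sln$ contravariance, the identity $\phi o = o$ yields $\phi[P, o] = [\phi P, o]$, and then the $\sln$ contravariance of $\Pi_\zeta$ gives $\tilde{\Pi}_\zeta(\phi P)(x) = \Pi_\zeta(\phi [P, o])(x) = \Pi_\zeta([P, o])(\phi^{-1} x) = \tilde{\Pi}_\zeta(P)(\phi^{-1} x)$. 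Continuity $\tilde{\Pi}_\zeta(P) \in C(\mathbb{R}^n)$ is inherited verbatim from the continuity clause of Lemma \ref{thm4}.

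The main obstacle I anticipate is the intersection identity: a direct geometric argument manipulating convex hulls must rule out the possibility that $[P,o] \cap [Q,o]$ contains rays from $o$ that bridge disjoint parts of $P \cap Q$, a phenomenon that genuinely occurs when $P \cup Q$ fails to be convex. The support-function route circumvents this issue cleanly, because the convexity of $P \cup Q$ forces the key identity $h_{P \cap Q} = \min(h_P, h_Q)$, which is precisely the algebraic fact that makes everything cancel.
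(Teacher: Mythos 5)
Your proposal takes essentially the same route as the paper: both reduce everything to Lemma \ref{thm4} via the identities $[P\cup Q,o]=[P,o]\cup[Q,o]$ and $[P\cap Q,o]=[P,o]\cap[Q,o]$ (and $\phi[P,o]=[\phi P,o]$), which the paper merely states as an observation and you actually verify through support functions. One small loose end: equality of support functions only identifies $[P\cup Q,o]$ with the \emph{convex hull} of $[P,o]\cup[Q,o]$, and the convexity of that union (which you then invoke to apply the valuation identity $h_{[P,o]}+h_{[Q,o]}=h_{[P,o]\cup[Q,o]}+h_{[P,o]\cap[Q,o]}$) does not follow from "the previous step"; it is, however, immediate from $[A,o]=\{\lambda a:a\in A,\ \lambda\in[0,1]\}$ for convex $A$, which in fact yields the union identity directly and lets the rest of your argument go through.
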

\begin{proof}
By Lemma \ref{thm4} and the definition of $\tilde{\Pi}_{\zeta}$, it is trivial that $\tilde{\Pi}_{\zeta}P\in C(\mathbb{R}^n)$ for each $P \in \MP^n$.
To show that $\tilde{\Pi}_{\zeta}$ is a valuation, observe that $[P\cup Q,o]=[P,o]\cup [Q,o], [P\cap Q, o]=[P,o]\cap [Q,o]$ when $P \cup Q \in \MP^n$.
Then we have
\begin{align*}
\tilde{\Pi}_{\zeta}(P\cup Q)+\tilde{\Pi}_{\zeta}(P\cap Q)
&=\Pi_\zeta([P\cup Q,o])+ \Pi_\zeta([P\cap Q,o])\\
&=\Pi_\zeta([P,o])+ \Pi_\zeta([Q,o])
=\tilde{\Pi}_{\zeta}(P)+\tilde{\Pi}_{\zeta}(Q)
\end{align*}
for every $P,Q,P\cup Q\in \mathcal{P}^n$.
The $\sln$ contravariance of $\tilde{\Pi}_{\zeta}$ follows similarly from
$[\phi P, o]=\phi[P,o]$ for every $\phi \in {\sln}$ and $P \in \MP^n$.
\end{proof}

\section{Proof of main results }

The ``if" parts of Theorems $\ref{thm0}$ and $\ref{thm2}$ now follow directly from Lemmas $\ref{thm4}$ and $\ref{lm0.4}$.

Let $\mathcal{T}_{o}^n$ be the set of simplices in $\mathbb{R}^n$ with one vertex at the origin.
To prove the ``only if" part of Theorem \ref{thm0}, Lemma \ref{lm2} tells us that we only need to show that $\sln$ contravariant valuations on $\mathcal{T}_{o}^n$ have the corresponding representation.
First, for $\sln$ contravariant map $Z:\mathcal{T}_{o}^{n}  \rightarrow C(\mathbb{R}^n \setminus \{o\})$, we obtain the following.

\begin{lemma}{\label{lm3}}
Let $Z:\mathcal{T}_{o}^n \rightarrow C(\mathbb{R}^n \setminus \{o\})$ be $\sln$ contravariant, $T\in \MTon$ and $x=(x_1,\dots,x_n)^t\in\ro$.
If $\dim T \leq n-2$, then
$$ZT(x)=ZT(e_n),~ x\in \mathbb{R}^n \setminus \{o\};$$
if $\dim T=n-1$ and $T \subset e_n^{\bot}$, then
\begin{align*}
ZT(x)=\begin{cases}
ZT(x_n e_n), &x_n \neq 0, \\
\lim_{t\rightarrow 0} ZT(t e_n), & x_n=0.
\end{cases}
\end{align*}
\end{lemma}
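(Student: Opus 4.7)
The plan is to exploit $\sln$ contravariance through the pointwise stabilizer of $L_0 := \operatorname{aff}(T)$ in $\sln$. Since $T \in \MTon$ has the origin as a vertex, $L_0$ is a linear subspace of dimension $d = \dim T$. Any $\psi \in \sln$ fixing $L_0$ pointwise satisfies $\psi T = T$, so the contravariance identity $Z(\psi T)(x) = ZT(\psi^{-1} x)$ collapses to $ZT(x) = ZT(\psi^{-1} x)$. The whole proof is then a matter of understanding the $\psi^{-1}$-orbits on $\mathbb{R}^n \setminus \{o\}$ and then invoking continuity of $ZT$ to fill in the remaining points.

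For $d \le n - 2$: choose any basis $v_1,\dots,v_n$ of $\mathbb{R}^n$ with $v_1,\dots,v_d$ spanning $L_0$. In this basis the group $G = \{\psi \in \sln : \psi|_{L_0}=\operatorname{id}\}$ is exactly the set of block matrices of the form $\begin{pmatrix} I_d & A \\ 0 & B \end{pmatrix}$ with arbitrary $A$ and $B \in \operatorname{SL}(n-d)$. Because $n - d \ge 2$, $\operatorname{SL}(n-d)$ acts transitively on $\mathbb{R}^{n-d}\setminus\{o\}$, and together with the free choice of $A$ this forces $G$ to act transitively on $\mathbb{R}^n \setminus L_0$. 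Hence $ZT$ is constant on $\mathbb{R}^n \setminus L_0$, and continuity of $ZT$ on $\mathbb{R}^n \setminus \{o\}$ propagates this constant to $L_0 \setminus \{o\}$ by taking a limit from the complement. In particular the value equals $ZT(e_n)$, regardless of whether $e_n$ happens to lie in $L_0$.

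For $d = n - 1$ with $T \subset e_n^{\bot}$, we have $L_0 = e_n^{\bot}$, so the pointwise stabilizer reduces to shears $\psi = \begin{pmatrix} I_{n-1} & w \\ 0 & 1 \end{pmatrix}$ with $w \in \mathbb{R}^{n-1}$. A direct calculation yields $\psi^{-1} x = (x_1 - w_1 x_n, \ldots, x_{n-1} - w_{n-1} x_n, x_n)$; when $x_n \ne 0$ the choice $w_i = x_i / x_n$ produces $\psi^{-1} x = x_n e_n$, giving $ZT(x) = ZT(x_n e_n)$. When $x \in L_0 \setminus \{o\}$ the orbit degenerates to $\{x\}$, so one must approach $x$ from outside $L_0$: pick $t_k \to 0$ with $t_k \ne 0$, set $y_k := x + t_k e_n$, apply the previous step to get $ZT(y_k) = ZT(t_k e_n)$, and pass to the limit using continuity of $ZT$ at $x$ to conclude $ZT(x) = \lim_{t \to 0} ZT(t e_n)$.

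The only delicate point, and the place where the $\cfo$-regularity is essential, is transferring the conclusion onto points of $L_0 \setminus \{o\}$ via continuity, since the stabilizer argument is blind there. Once continuity is available the rest is a routine orbit computation.
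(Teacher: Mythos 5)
Your proposal is correct and follows essentially the same route as the paper: both identify the pointwise stabilizer of $\operatorname{lin}(T)$ in $\sln$ as the block matrices $\begin{pmatrix} I & A \\ 0 & B\end{pmatrix}$, use transitivity of its action on the complement of $\operatorname{lin}(T)$ (resp.\ the shear computation when $d=n-1$), and then invoke continuity of $ZT$ on $\ro$ to handle points of $\operatorname{lin}(T)\setminus\{o\}$. No gaps.
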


\begin{proof}
Let $T\in \mathcal{T}_{o}^n$ and $\dim T =d<n$. We can assume that the linear hull of $T$ is ${\rm lin}\{ e_1,...,e_d \} $, the linear hull of $\{e_1,...,e_d \}$. Let
$\phi:= \begin{bmatrix} I & A\\ 0 & B  \end{bmatrix} \in {\sln} $, where $I\in \mathbb{R}^{d\times d}$ is the identity matrix, $B\in {\rm SL}(n-d)$, and $0 \in \mathbb{R}^{(n-d)\times d} $ is the zero matrix.
Also, let $x=\begin{bmatrix} x' \\ x'' \end{bmatrix} \in \mathbb{R}^{d \times (n-d)}$. Thus, $\phi T=T$. By the $\sln$ contravariance of $Z$, we have
$$ZT(x)=Z(\phi T)(x)=ZT(\phi^{-1}x)=ZT\begin{pmatrix} x'-AB^{-1}x'' \\ B^{-1}x''  \end{pmatrix},~ \text{since}~ \phi^{-1}= \begin{bmatrix}  I & -AB^{-1} \\ 0 & B^{-1} \end{bmatrix}.   $$

For $d\leq n-2$, assume first $x'' \neq o$.
We can choose a suitable matrix $B$ such that $B^{-1}x''$ is any nonzero vector on ${\rm lin}\{e_{d+1},...,e_n  \} $. After fixing $B$ we can also choose a suitable matrix $A$ such that $x'-AB^{-1}x''$ is any vector in ${\rm lin}\{e_1,...,e_d  \}$. So $ZT(x)$ is a constant function on a  dense set of $\mathbb{R}^n \setminus \{ o\}$. By the continuity of $ZT$, we get $ZT(x)=ZT(e_n).$

For $d=n-1$, $B=1$. If $x_n=x''\neq 0$, we can choose a suitable $A$ such that $x'-AB^{-1}x''=0$.
Hence,
$ZT(x)=ZT(x_n e_n)$.

If $x_n =0$, now we have $x' \neq 0$.
Thus by the continuity of $ZT$,
\begin{align*}
ZT\begin{pmatrix} x' \\ 0    \end{pmatrix}={\lim_{{t}\rightarrow 0}}ZT\begin{pmatrix} x' \\ t \end{pmatrix} = \lim_{t \rightarrow 0}ZT(t e_n).
\end{align*}
\end{proof}

The following lemma characterizes the classic projection functions in the lower-dimensional case.
\begin{lemma} \label{lm4}
If $Z:\mathcal{T}_{o}^{n} \rightarrow C(\mathbb{R}^n \setminus \{o\})$ is an ${\sln}$ contravariant valuation satisfying $Z\{ o \}(e_n)=0$ and $Z[o,e_1](e_n)=0$, then there exists a constant $c_{n-1}\in \mathbb{R}$ such that
$$ZT(x)=c_{n-1}V_{1}(T,[-x,x]), ~ x\in \mathbb{R}^n \setminus \{ o \}$$
for every $T\in \mathcal{T}_{o}^{n}$ satisfying $\dim T \leq n-1$.
\end{lemma}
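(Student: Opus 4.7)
The plan splits into two stages. \emph{Stage 1} establishes $ZT\equiv 0$ for $\dim T\leq n-2$. The key observation is that any $d$-simplex $T=[o,v_1,\dots,v_d]\in\MTon$ with $d\leq n-1$ is $\sln$-equivalent to $T^d:=[o,e_1,\dots,e_d]$: define a linear bijection from $\operatorname{lin}\{v_1,\dots,v_d\}$ to $\operatorname{lin}\{e_1,\dots,e_d\}$ by $v_i\mapsto e_i$, and use the $(n-d)$-dimensional freedom in the complement to extend it to some $\phi\in\sln$. Combined with Lemma \ref{lm3}, this shows that for $d\leq n-2$ the function $ZT$ equals the constant $c_d:=ZT^d(e_n)$ on $\ro$, depending only on $d$. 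By hypothesis $c_0=c_1=0$. For $d\geq 2$, dissect $T^d=[o,e_1,m,e_3,\dots,e_d]\cup[o,m,e_2,e_3,\dots,e_d]$ with $m=(e_1+e_2)/2$; all four bodies lie in $\MPon$, and the intersection $[o,m,e_3,\dots,e_d]\in\MTon$ is a $(d-1)$-simplex. Evaluating the valuation identity at $x=e_n$ gives $2c_d=c_d+c_{d-1}$, so $c_d=c_{d-1}$; induction yields $c_d=0$ for every $d\leq n-2$.

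\emph{Stage 2} classifies $ZT$ for $\dim T=n-1$. Since mixed volumes are $\sln$-invariant and $Z$ is $\sln$-contravariant, it suffices to treat $T\subset e_n^\bot$: choosing $\phi\in\sln$ that maps $\operatorname{lin}(T)$ onto $e_n^\bot$ gives $ZT(x)=Z(\phi T)(\phi x)$ and $V_1(\phi T,[-\phi x,\phi x])=V_1(T,[-x,x])$. For such $T=[o,v_1,\dots,v_{n-1}]$, let $\psi\in\operatorname{GL}(n-1)$ send $e_i$ to $v_i$, put $\lambda=\det\psi$, and set $\phi=\operatorname{diag}(\psi,\lambda^{-1})\in\sln$; then $\phi T^{n-1}=T$ and $\phi^{-1}(te_n)=t\lambda e_n$, so $ZT(te_n)=h(t\lambda)$ where $h(u):=ZT^{n-1}(ue_n)$ and $|\lambda|=(n-1)!\,\mathcal{H}^{n-1}(T)$. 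By Lemma \ref{lm3}, the classification reduces to identifying $h$.

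To determine $h$, for each $s\in(0,1)$ introduce the point $m_s=se_1+(1-s)e_2$ on the edge $[e_1,e_2]$ of $T^{n-1}$ and dissect $T^{n-1}=[o,e_1,m_s,e_3,\dots,e_{n-1}]\cup[o,m_s,e_2,e_3,\dots,e_{n-1}]$: the two $(n-1)$-subsimplices correspond to $\det\psi=1-s$ and $\det\psi=s$ respectively, while the $(n-2)$-dimensional intersection $[o,m_s,e_3,\dots,e_{n-1}]$ has vanishing $Z$-value at $te_n$ by Stage 1. The valuation identity becomes $h(t)=h(t(1-s))+h(ts)$ for every $t\neq 0$ and $s\in(0,1)$, so $h$ is Cauchy on $(0,\infty)$. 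Continuity of $ZT^{n-1}$ on $\ro$ forces $h(u)=cu$ for $u>0$. Odd permutations of $\{e_1,\dots,e_{n-1}\}$ (available since $n\geq 3$) fix $T^{n-1}$ and give $\det\psi=-1$, hence $\phi^{-1}(te_n)=-te_n$ yields $h(-u)=h(u)$; so $h(u)=c|u|$ throughout $\R\setminus\{0\}$. Consequently $ZT(te_n)=c(n-1)!\,|t|\,\mathcal{H}^{n-1}(T)=c_{n-1}V_1(T,[-te_n,te_n])$ with $c_{n-1}:=cn!/2$, and Lemma \ref{lm3} together with continuity at $x_n=0$ promotes this to $ZT(x)=c_{n-1}V_1(T,[-x,x])$ for every $x\in\ro$ (using that $V_1(T,[-x,x])$ depends only on $|x_n|$ when $T\subset e_n^\bot$). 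The main technical obstacle is isolating the Cauchy equation for $h$: once the variable-point dissection is identified and Stage 1 kills the $(n-2)$-dimensional residue, the remaining steps are routine.
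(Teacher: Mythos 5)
Your proof is correct and follows essentially the same route as the paper's: dissect the simplex by a hyperplane through the origin, $n-2$ vertices, and a point on the edge $[e_1,e_2]$, identify the pieces with $\sln$-images of rescaled standard simplices to extract a Cauchy functional equation, solve it by continuity, and extend to all of $\R^n\setminus\{o\}$ via Lemma \ref{lm3}. The only differences are cosmetic (you handle $d\le n-2$ with the midpoint dissection, which is the paper's $\lambda$-dissection at $\lambda=1/2$, and you parametrize $(n-1)$-simplices by $\det\psi$ rather than by $sT^{n-1}$ plus contravariance), so no further comparison is needed.
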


\begin{proof} For $0<\lambda<1$, let
$H_{\lambda}:=\{ x\in \mathbb{R}^n: x\cdot ((1-\lambda)e_1 - \lambda e_2  )=0   \}$, $H_{\lambda}^{-}:=\{ x\in \mathbb{R}^n: x\cdot ((1-\lambda)e_1 - \lambda e_2  )\leq 0   \}$, $H_{\lambda}^{+}:=\{ x\in \mathbb{R}^n: x\cdot ((1-\lambda)e_1 - \lambda e_2  )\geq 0   \}.$
Since $Z$ is a valuation, then
\begin{equation}
Z(sT^d)(x)+ Z(sT^d \cap H_{\lambda})(x)=Z(sT^d \cap H_{\lambda}^{-} )(x) + Z(sT^d \cap H_{\lambda}^{+})(x), ~x\in \mathbb{R}^n\setminus \{ o \}  \label{a1}
\end{equation}
for $2\leq d \leq n$ and $s>0$. Let $\hat{T}^{d-1}=[o,e_1,e_3,...,e_d]$ and $\phi_{1}, \phi_{2} \in {\sln}$ such that
$\phi_{1} e_{1} = \lambda e_1 +(1-\lambda)e_2, ~\phi_{1}e_{2}=e_{2}, ~\phi_{1}e_{n}=\frac{1}{\lambda}e_n, ~\phi_{1}e_i=e_i, ~{\text{for}}~ 3\leq i\leq n-1, $
and
$\phi_{2}e_1=e_1, ~ \phi_{2}e_2=\lambda e_1 + (1-\lambda)e_2, ~ \phi_{2} e_{n} = \frac{1}{1-\lambda}e_n , ~ \phi_{2}e_{i}=e_{i}, ~{\text{for}}~ 3\leq i \leq n-1. $
For $2\leq d \leq n-1$, we have $$T^d \cap H_{\lambda}^{-}= \phi_{1} T^{d},~~ T^{d}\cap H_{\lambda}^{+}=\phi_{2} T^d, ~~ {\text{and}} ~~ T^d\cap H_{\lambda}=\phi_{1}\hat{T}^{d-1} . $$
Also, since $Z$ is ${\sln}$ contravariant, ($\ref{a1}$) implies that
\begin{equation}
Z(sT^d)(te_n) + Z(s\hat{T}^{d-1})(\lambda t e_n)= Z(sT^d)(\lambda t e_n) + Z(sT^d)((1-\lambda)te_n)   \label{b1}
\end{equation}
for $t\in \mathbb{R}\setminus \{ 0 \}$ and $0< \lambda <1$.
If $d\leq n-2$, now Lemma \ref{lm3} together with the ${\sln}$ contravariance of $Z$ implies
$$Z(s\hat{T}^{d-1})(e_n)=Z(sT^{d-1})(e_n)=Z(sT^d)(e_n).$$
Hence,
$$Z(sT^d)(e_n)=Z(sT^{d-1})(e_n)=\dots =Z(s[o,e_1])(e_n)=Z[o,e_1](e_n).$$
Combined with Lemma {$\ref{lm3}$} and the assumption $Z[o,e_1](e_n)=0$, we have
$Z(sT^d)\equiv 0$
for every $s>0$ and $d\leq n-2$.

If $d=n-1$, the relations $(\ref{b1})$ together with the ${\sln}$ contravariance of $Z$ and the above result for $d\le n-2$ show that
\begin{equation}
ZT^{n-1}(te_n)=ZT^{n-1}(\lambda t e_n) + ZT^{n-1}((1-\lambda)te_n)   \label{d1}
\end{equation}
for $t\in \mathbb{R}\setminus \{0\}$. Let $f(t):=ZT^{n-1}(te_n).$ For arbitrary $t_1,t_2 >0$, setting $t=t_1+t_2$, $\lambda=\frac{t_1}{t_1+t_2}$ in
$(\ref{d1})$, we get that $f$ satisfies the Cauchy functional equation
$f(t_1+t_2)=f(t_1)+f(t_2)$
for every $t_1,t_2>0$. Since $f$ is continuous, there exists a constant $\tilde c_{n-1}\in \mathbb{R}$ such that
$$ZT^{n-1}(te_n)=f(t)= \tilde c_{n-1}t$$
for $t>0$. Also, since $Z$ is ${\sln}$ contravariant, $ZT^{n-1}(te_n)=ZT^{n-1}(-te_n)$, and
$$Z(sT^{n-1})(te_n)=ZT^{n-1}(s^{n-1}te_n)=\left\{
\begin{aligned}
&\tilde c_{n-1}s^{n-1}t,~~ t>0\\
&-\tilde c_{n-1}s^{n-1}t,~~ t<0
\end{aligned}. \right.
$$
Denote $c_{n-1}:=\tilde c_{n-1}\frac{n!}{2}$. Combined with Lemma {$\ref{lm3}$}, we have for $x_n \neq 0$,
\begin{align*}
&Z(sT^{n-1})(x)=Z(sT^{n-1})(x_{n}e_n)=\tilde c_{n-1}s^{n-1}|x_n|=c_{n-1}V_{1}(sT^{n-1},[-x,x]),
\end{align*}
and for $x_n = 0$ (denote $x=\begin{pmatrix} x' \\ 0   \end{pmatrix}$ and $x_t=\begin{pmatrix} x' \\ t    \end{pmatrix}$),
\begin{align*}
Z(sT^{n-1})\begin{pmatrix} x' \\ 0   \end{pmatrix}&= {\lim_{{t}\rightarrow 0}} Z(sT^{n-1})\begin{pmatrix} x' \\ t    \end{pmatrix} \\
&=\lim_{t \rightarrow 0}c_{n-1} V_{1}(sT^{n-1}, [-x_t,x_t])=c_{n-1}V_{1}(sT^{n-1}, [-x,x]).
 \end{align*}

Now the desired result holds for $T=sT^d$ for every $s>0$ and $0\leq d \leq n-1$. Together with the ${\sln}$ contravariance of $Z$, the proof is completed.
\end{proof}

Next, we deal with simple valuations.

\begin{lemma}{\label{lm5}}
If $Z: \mathcal{T}_{o}^{n} \rightarrow C(\mathbb{R}^{n}\setminus \{o\})$ is a simple and ${\sln}$ contravariant valuation, then there is a binary function $\zeta : \mathbb{R}\setminus \{0\} \times (0,\infty) \rightarrow \mathbb{R}$ satisfying that $\zeta(t,\cdot)$ is a Cauchy function on $(0,\infty)$ for every $t\in \mathbb{R} \setminus \{0\}$ and $\zeta(\cdot, s)$ is continuous on $\mathbb{R} \setminus \{0\}$ for every $s>0$ such that
\begin{equation}
Z(sT^n)(te_n)=\zeta\Big(\frac{t}{s}, \frac{s^n}{n!}\Big) =\Pi_{\zeta}(sT^n)(te_n) \label{k1}
\end{equation}
for $s>0$ and $t\in \mathbb{R}\setminus \{0\}$.

\end{lemma}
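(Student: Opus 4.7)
My plan is to exploit the simplicity of $Z$ and $\sln$ contravariance on a hyperplane dissection of $sT^n$ to extract a Cauchy equation in the volume parameter. For $\lambda \in (0,1)$, cut $sT^n$ by the hyperplane $H_\lambda = \{x : (1-\lambda)x_1 = \lambda x_2\}$ already used in Lemma \ref{lm4}. Since $sT^n \cap H_\lambda$ is $(n-1)$-dimensional, simplicity kills its $Z$-value and the valuation identity reduces to
\[
Z(sT^n)(te_n) = Z(sT^n \cap H_\lambda^-)(te_n) + Z(sT^n \cap H_\lambda^+)(te_n).
\]
Both pieces are simplices with vertex at $o$: for instance $sT^n \cap H_\lambda^- = [o,\, s(\lambda e_1 + (1-\lambda)e_2),\, se_2, \ldots, se_n]$, with volume $\lambda s^n/n!$. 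I would realize it as $\phi_1(\lambda^{1/n} sT^n)$ via $\phi_1 \in \sln$ given by
\[
\phi_1 e_1 = \lambda^{(n-1)/n} e_1 + (1-\lambda)\lambda^{-1/n} e_2, \qquad \phi_1 e_j = \lambda^{-1/n} e_j \quad (2 \le j \le n),
\]
and analogously choose $\phi_2 \in \sln$ with $\phi_2((1-\lambda)^{1/n} sT^n) = sT^n \cap H_\lambda^+$. Both maps are crafted so that they act on $e_n$ by pure scalars $\lambda^{-1/n}$ and $(1-\lambda)^{-1/n}$, giving $\phi_1^{-1}(te_n) = \lambda^{1/n} te_n$ and $\phi_2^{-1}(te_n) = (1-\lambda)^{1/n} te_n$.

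Applying $\sln$ contravariance to the two terms then yields
\[
Z(sT^n)(te_n) = Z(\lambda^{1/n}sT^n)(\lambda^{1/n}te_n) + Z((1-\lambda)^{1/n}sT^n)((1-\lambda)^{1/n}te_n).
\]
Setting $r := t/s$ and $v := s^n/n!$, the ratio $r$ is preserved on both sides while $v$ is split into $\lambda v$ and $(1-\lambda)v$. Defining $\zeta(r,v) := Z(sT^n)(te_n)$ (well-posed since $s$ and $t$ are determined by $r,v$) converts the identity into $\zeta(r,v) = \zeta(r,\lambda v) + \zeta(r,(1-\lambda)v)$; taking $v_1 := \lambda v$, $v_2 := (1-\lambda)v$ (which range over all pairs of positive reals as $\lambda \in (0,1)$ and $v > 0$ vary) recognizes this as the Cauchy equation $\zeta(r, v_1+v_2) = \zeta(r,v_1) + \zeta(r,v_2)$ for fixed $r \in \R \setminus \{0\}$. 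Continuity of $\zeta(\cdot, v)$ on $\R \setminus \{0\}$ is immediate from the continuity of $Z(sT^n)$ on $\R^n \setminus \{o\}$.

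For the final identification $\Pi_\zeta(sT^n)(te_n) = \zeta(t/s, s^n/n!)$, the only facet of $sT^n$ whose affine hull misses $o$ is the one opposite $o$, with unit outer normal $u_* = (e_1+\cdots+e_n)/\sqrt n$; one checks $h_{sT^n}(u_*) = s/\sqrt n$ and $V_{sT^n}(u_*) = s^n/n!$ (the cone-volumes at the coordinate facets vanish, since $o$ lies on them). Substituting these into the definition of $\Pi_\zeta$ yields precisely $\zeta(t/s, s^n/n!)$, as required.

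The main obstacle will be designing the two $\sln$ dissection maps so that they simultaneously realize the hyperplane cut \emph{and} act on the $e_n$-axis by pure scalars; this synchronization is exactly what keeps $t/s$ invariant and collapses the two-parameter relation into a clean one-variable Cauchy equation in $v$. Once the explicit forms of $\phi_1$ and $\phi_2$ above are in hand, the remaining steps are essentially bookkeeping.
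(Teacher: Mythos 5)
Your proposal is correct and follows essentially the same route as the paper: your $\phi_1,\phi_2$ coincide with the paper's $\phi_3,\phi_4$, the dissection identity you derive is exactly the paper's equation \eqref{f1}, and the reparametrization $(r,v)=(t/s,\,s^n/n!)$ turning it into a Cauchy equation in the volume variable, together with the evaluation of $\Pi_\zeta(sT^n)(te_n)$ via the single facet opposite the origin, matches the paper's argument.
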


\begin{proof}
     Let $\phi_{3},\phi_{4}\in {\sln}$ such that
$\phi_{3}e_1=\lambda^{-1/n}(\lambda e_1 + (1-\lambda)e_2),~ \phi_{3}e_2=\lambda^{-1/n}e_2, ~\phi_{3}e_i=\lambda^{-1/n}e_i, ~ \text{for} ~ 3\leq i \leq n, $
and $\phi_{4}e_1=(1-\lambda)^{-1/n}e_1, ~\phi_{4}e_2=(1-\lambda)^{-1/n} (\lambda e_1 + (1-\lambda)e_2), ~\phi_{4}e_i=(1-\lambda)^{-1/n}e_i, ~ \text{for}~3\leq i\leq n.$
Note that $$sT^n \cap H_{\lambda}^{-} = \phi_{3}\lambda^{1/n}sT^n, ~ sT^n \cap H_{\lambda}^{+}=\phi_{4}(1-\lambda)^{1/n}sT^n, ~\text{and}~ sT^n\cap H_{\lambda}=\phi_{3}\lambda^{1/n}s\hat{T}^{n-1}.  $$
The valuation property $(\ref{a1})$ for $d=n$ together with the ${\sln}$ contravariance and simplicity of $Z$ shows that
\begin{equation}
Z(sT^n)(x)=Z(\lambda^{1/n} sT^n)(\phi_{3}^{-1}x)+Z((1-\lambda)^{1/n}sT^n)(\phi_{4}^{-1}x) .\label{e1}
\end{equation}
For $t'\in \mathbb{R}\setminus \{0\}$, choosing $x=t'e_n$ in $({\ref{e1}})$, we have
\begin{equation}
Z(sT^n)(t'e_n)=Z(\lambda^{1/n}sT^n)(\lambda^{1/n}t'e_n)+Z((1-\lambda)^{1/n}sT^n)((1-\lambda)^{1/n}t'e_n) \label{f1}
\end{equation}
for any $0<\lambda<1$ and $s>0$.
Denote
$\zeta(t;\frac{r}{n!}):=Z(r^{1/n}T^n)(r^{1/n}te_n) $
for $r>0$.
Therefore,
$$Z(sT^n)(te_n)=Z(sT^n)\Big(s\frac{t}{s}e_n \Big)=\zeta \Big(\frac{t}{s},\frac{s^n}{n!} \Big).$$
The second equation of the \eqref{k1} follows from
$\zeta \Big(\frac{t}{s},\frac{s^n}{n!}\Big) = \sum_{u\in \mathcal{N}_{o}(sT^n)} \zeta\Big(  \frac{te_n\cdot u}{h_{sT^n}(u)}, V_{sT^n}(u)     \Big)$ and the definition of $\Pi_\zeta$.

Since $t\mapsto Z(sT^n)(te_n)$ is continuous for every $s>0$,
we have $\zeta(\cdot , s)\in C(\mathbb{R}\setminus \{0\})$ for any $s>0$.
For arbitrary $r_1,r_2>0$, $t\in \mathbb{R}\setminus \{0\}$, setting $s=(r_1+r_2)^{\frac{1}{n}}$, $t'=(r_1+r_2)^{\frac{1}{n}}t$, $\lambda=\frac{r_1}{r_1+r_2}$ in $(\ref{f1})$,
we get that $\zeta(t,\cdot)$ is a Cauchy function for every $t\in \mathbb{R}\setminus \{0\}$.
\end{proof}

Previous lemmas are slightly modified versions of the corresponding lemmas in \cite{li2020}.
The following lemma is a key improvement showing that $\zeta$ introduced in Lemma \ref{lm5} can be extended to be defined on $\R \times (0,\infty)$ nicely.
It does not appear in \cite{li2020} since $ZT^n(o)$ is defined therein.

\begin{lemma} \label{lm6}
Let $Z:\mathcal{T}_{o}^n\rightarrow C(\mathbb{R}^n \setminus \{o\})$ be a simple and ${\sln}$ contravariant valuation, and define $\zeta: \mathbb{R}\setminus \{0\} \times (0,\infty) \rightarrow \mathbb{R}$ as in Lemma \ref{lm5}.
Then $\lim_{t\rightarrow 0}\zeta(t,s)$ exists for every $s>0$.

Moreover, define $\zeta(0,s):=\lim_{t\rightarrow 0}\zeta(t,s)$.
Then $\zeta(0,\cdot)$ is a Cauchy function.
\end{lemma}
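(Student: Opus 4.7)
Fix $s>0$ and set $c=(n!\,s)^{1/n}$, so that $\zeta(t,s)=Z(cT^n)(cte_n)$; after substituting $\tau=ct$ it suffices to show that $\lim_{\tau\to 0^{\pm}}Z(cT^n)(\tau e_n)$ exists. My strategy will be to use $\sln$ contravariance with a $\tau$-dependent shear to convert the evaluation of $Z(cT^n)$ near the origin into the evaluation of $Z$ of a ($\tau$-dependent) simplex at a fixed nonzero point; the resulting simplex degenerates as $\tau\to 0$, so one further dissects it by a hyperplane through $o$ to separate a bounded ``stable'' piece from a ``runaway'' piece that can be handled recursively. The Cauchy property of $\zeta(0,\cdot)$ will then follow essentially for free by passing the identity from Lemma~\ref{lm5} to the limit.

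Concretely, introduce $\phi_\tau\in\sln$ with $\phi_\tau e_i=e_i$ for $i<n$ and $\phi_\tau e_n=-\tau^{-1}e_1+e_n$ (upper triangular with unit diagonal, so $\det=1$). Then $\phi_\tau(\tau e_n)=-e_1+\tau e_n$, and by $\sln$ contravariance
\[
Z(cT^n)(\tau e_n)=Z(\phi_\tau\, cT^n)(-e_1+\tau e_n).
\]
The evaluation point tends to the \emph{fixed} nonzero point $-e_1$ as $\tau\to 0^{+}$, but the simplex $\phi_\tau\,cT^n=[o,ce_1,\dots,ce_{n-1},-c\tau^{-1}e_1+ce_n]$ has a runaway vertex escaping to $-\infty$ along $e_1$. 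To control this, I would dissect it by the hyperplane $\{x_1=0\}$, which passes through $o$ and separates the runaway vertex from $ce_1$; the pieces $Q_\tau^{+}$ (containing $ce_1$) and $Q_\tau^{-}$ (containing the runaway vertex) are $n$-simplices in $\mathcal{T}_o^{n}$, and their intersection is the $(n-1)$-dimensional simplex $[o,ce_2,\dots,ce_{n-1},(c\tau/(1+\tau))e_n]$. Simplicity of $Z$ makes the intersection contribute zero, so
\[
Z(cT^n)(\tau e_n)=Z(Q_\tau^{+})(-e_1+\tau e_n)+Z(Q_\tau^{-})(-e_1+\tau e_n).
\]
A direct computation via a further $\sln$ transformation rewrites the $Z$-value on $Q_\tau^{+}$ in terms of $\zeta$ at an argument that scales like $\tau^{1/n}$ times a bounded vector, which should be bootstrapped from the continuity of $\zeta(\cdot,u)$ away from zero. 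The piece $Q_\tau^{-}$ is then handled by iterating the same shear-and-dissection trick, each iteration reducing the severity of the degeneration.

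Once existence of $\zeta(0,s):=\lim_{t\to 0}\zeta(t,s)$ is established for every $s>0$, the Cauchy property of $\zeta(0,\cdot)$ follows by passing to the limit $t\to 0$ in $\zeta(t,s_1)+\zeta(t,s_2)=\zeta(t,s_1+s_2)$ from Lemma~\ref{lm5}. The main obstacle will be making the decomposition argument rigorous: one must verify that $Q_\tau^{+}$ is not a disguised version of the original problem and that the recursion on $Q_\tau^{-}$ is visibly convergent rather than circular. Simplicity of $Z$ is the crucial tool throughout, since it eliminates exactly the boundary contributions that would otherwise spoil the dissection, and Lemma~\ref{lm3} applied to the induced lower-dimensional simplices ensures that residual terms have limits as $\tau\to 0$.
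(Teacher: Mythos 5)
Your reduction is sound as far as it goes: the shear $\phi_\tau$ does move the evaluation point to $-e_1+\tau e_n\to -e_1\neq o$, and simplicity does kill the interface term in the dissection by $\{x_1=0\}$. But the proof does not close, because both pieces you produce still contain the original difficulty. For $Q_\tau^{+}=[o,ce_1,\dots,ce_{n-1},\tfrac{c\tau}{1+\tau}e_n]$, the diagonal $\sln$ map $\psi_\tau$ normalizing it to a standard simplex $\tilde c\,T^n$ (with $\tilde c=c(\tau/(1+\tau))^{1/n}\to 0$) sends $-e_1+\tau e_n$ to $(\tau/(1+\tau))^{1/n}\bigl(-e_1+(1+\tau)e_n\bigr)$, which tends to the origin as $\tau\to 0$ --- exactly the regime where continuity of $Z(\tilde cT^n)$ gives you nothing. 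Moreover this point is not a multiple of $e_n$, so it is not expressible through $\zeta$ at all: values $Z(sT^n)(x)$ for general $x$ are only pinned down by Lemma \ref{lm7}, and only modulo the quantity you are trying to control. One also cannot argue that the contribution is small because the volume $\tilde c^{\,n}/n!\to 0$: without measurability, a Cauchy function need not tend to $0$ at $0^+$ (it can be unbounded on every interval), so ``the argument scales like $\tau^{1/n}$'' yields no estimate. The recursion on $Q_\tau^{-}$ is likewise only asserted, not shown to terminate or converge. So the central analytic claim --- existence of $\lim_{t\to 0}\zeta(t,s)$ --- is not actually established; only your final step (passing to the limit in $\zeta(t,s_1)+\zeta(t,s_2)=\zeta(t,s_1+s_2)$ to get that $\zeta(0,\cdot)$ is Cauchy) is complete, and it agrees with the paper.

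For comparison, the paper avoids the runaway body entirely: it reuses the dissection identity \eqref{e1}, but evaluates it at $x=\lambda^{-1/n}(x_1-r)e_1$ with $\lambda=(x_1-r)/x_1$ (and symmetrically with a multiple of $e_2$ for the left limit), so that after applying $\phi_3^{-1},\phi_4^{-1}$ every term on the right-hand side is $Z$ of a standard simplex at a multiple of a coordinate vector, hence a value of $\zeta$ by the coordinate symmetry of $T^n$. The Cauchy property in the second variable produces the exact cancellation $\zeta(t,\tfrac{s^n}{\lambda n!})-\zeta(t,\tfrac{(1-\lambda)s^n}{\lambda n!})=\zeta(t,\tfrac{s^n}{n!})$ with the second argument held \emph{fixed}, while the left-hand side $Z(sT^n)(x_1e_1-re_2)$ is continuous up to $x_1=r$ because $r(e_1-e_2)\neq o$. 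Letting $x_1\downarrow r$ and $x_1\uparrow r$ then gives both one-sided limits and shows they coincide. If you want to salvage your route, the missing ingredient is precisely such an identity expressing $Z(cT^n)$ near the degenerate direction in terms of $\zeta$ at arguments bounded away from both singularities (spatial and volumetric); your dissection does not provide one.
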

\begin{proof}
Let $x_1>r>0$ and $x=\lambda^{-1/n}(x_1-r)e_1$, $0<\lambda=\frac{x_1-r}{x_1}<1$, and replace $s$ by $\lambda^{-1/n} s$ in (\ref{e1}).
We obtain
\begin{align}\label{j1}
Z(sT^n)(x_1e_1 - r e_2 ) &= Z\big(\lambda^{-1/n}sT^n\big)\big(\lambda^{-1/n}(x_1-r)e_1 \big) \notag\\
&\qquad -Z\big(\lambda^{-1/n} (1-\lambda)^{1/n}sT^n\big) \big(\lambda^{-1/n}(1-\lambda)^{1/n}(x_1-r)e_1\big).
\end{align}
Then the ${\sln}$ contravariance of $Z$ together with \eqref{k1} and ($\ref{j1}$) implies
\begin{align*}
Z(sT^n)(x_1e_1 -re_2)&=\zeta\Big(\frac{x_1 -r}{s}, \frac{s^n}{\lambda n!} \Big) - \zeta \Big(  \frac{x_1-r}{s}, \frac{(1-\lambda)s^n}{\lambda n!}  \Big)=\zeta \Big(\frac{x_1-r}{s}, \frac{s^n}{n!}    \Big).
\end{align*}
Since $Z(sT^n) \in \cfo$, with $x_1\downarrow r$, we obtain
\begin{equation}
\lim_{t \downarrow 0}\zeta\Big(t, \frac{s^n}{n!} \Big)=Z(sT^n)\big( r(e_1-e_2) \big).\label{j5}
\end{equation}

Let $r>x_1>0$ and $x=(1-\lambda)^{-1/n}(x_1-r)e_2$, $0<\lambda=\frac{x_1}{r}<1$, and replace $s$ by $(1-\lambda)^{-1/n}s$ in $(\ref{e1})$.
We obtain
\begin{align}
Z(sT^n)(x_1e_1 - r e_2 ) &= Z\big((1-\lambda)^{-1/n}sT^n\big)\big((1-\lambda)^{-1/n}(x_1-r)e_2 \big) \notag\\
& \qquad -Z\big(\lambda^{1/n} (1-\lambda)^{-1/n}sT^n\big) \big(\lambda^{1/n}(1-\lambda)^{-1/n}(x_1-r)e_2\big).  \label{j3}
\end{align}
Then the ${\sln}$ contravariance of $Z$ together with \eqref{k1} and ($\ref{j3}$) implies
\begin{align*}
Z(sT^n)(x_1 e_1 -re_2)&=\zeta\Big(\frac{x_1-r}{s}, \frac{s^n}{(1-\lambda)n!}    \Big)  - \zeta \Big( \frac{x_1-r}{s}, \frac{\lambda s^n}{(1-\lambda)n!}  \Big)= \zeta\Big( \frac{x_1-r}{s}, \frac{s^n}{n!}  \Big).
\end{align*}
Let $x_1 \uparrow  r$. We obtain
\begin{equation}
\lim_{t \uparrow 0}\zeta\Big( t, \frac{s^n}{n!} \Big)=Z(sT^n)\big( r(e_1-e_2) \big). \label{j6}
\end{equation}
Hence, $\lim_{t \downarrow 0}\zeta(t, s)=\lim_{t \uparrow 0}\zeta(t, s)$ for every $s>0$ follows directly from $(\ref{j5})$ and $(\ref{j6})$. Now
\begin{align*}
\zeta(0,r+s)=\lim_{t \to 0}\zeta(t,r+s)
=\lim_{t \to 0}\zeta(t,r)+\lim_{t \to 0}\zeta(t,s)
=\zeta(0,r)+\zeta(0,s)
\end{align*}
for every $r,s>0$, which completes the proof.
\end{proof}

We can use the same induction procedure of \cite[Lemma 5.4]{li2020} to get the following Lemma.
The proof is the same hence we omit the proof.

\begin{lemma} \label{lm7}
If $Z:\mathcal{T}_{o}^{n}\rightarrow C(\mathbb{R}^n \setminus \{o\})$ is a simple and ${\sln}$ contravariant valuation and $Z(sT^n)(te_n)=0$ for any $s>0$, $t\in \mathbb{R}^n \setminus \{o\}$, then
$$
Z(sT^n)(x)=0
$$
for any $x\in \mathbb{R}^n \setminus \{o\}$.
\end{lemma}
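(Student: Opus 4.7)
My plan is to promote the axis vanishing $Z(sT^n)(te_n)=0$ to $Z(sT^n)(x)=0$ for every $x\in\ro$ by induction on the number of nonzero coordinates of $x$. The first step is to upgrade the hypothesis to all coordinate axes: for $n\ge 3$ the alternating group $A_n$ lies inside $\sln$, every $\pi\in A_n$ satisfies $\pi T^n=T^n$, and $A_n$ acts transitively on $\{e_1,\dots,e_n\}$, so the $\sln$ contravariance of $Z$ immediately gives $Z(sT^n)(te_j)=0$ for every $s>0$, $t\in\R\setminus\{0\}$, and $j\in\{1,\dots,n\}$. Next, the derivation of (\ref{e1}) uses nothing about the specific index pair $(1,2)$ beyond the simplicity and $\sln$ contravariance of $Z$; conjugating $\phi_3,\phi_4$ by an even permutation sending $(1,2)$ to an arbitrary ordered pair $(i,j)$ with $i\ne j$ produces transformations $\phi_{3,i,j},\phi_{4,i,j}\in\sln$ and the analogous identity
\begin{equation}\label{planeq}
Z(sT^n)(x)=Z(\lambda^{1/n}sT^n)(\phi_{3,i,j}^{-1}x)+Z((1-\lambda)^{1/n}sT^n)(\phi_{4,i,j}^{-1}x),
\end{equation}
valid for every $0<\lambda<1$ and $x\in\ro$.

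A short coordinate computation shows that $\phi_{3,i,j}^{-1}x$ agrees with $\lambda^{1/n}x$ except in the $i$- and $j$-th entries, with $j$-th entry equal to $\lambda^{1/n}(x_j-(1-\lambda)x_i/\lambda)$; similarly the $i$-th entry of $\phi_{4,i,j}^{-1}x$ is $(1-\lambda)^{1/n}(x_i-\lambda x_j/(1-\lambda))$. When $x_i$ and $x_j$ share a common sign, the choice $\lambda=x_i/(x_i+x_j)\in(0,1)$ simultaneously kills the $j$-th coordinate of $\phi_{3,i,j}^{-1}x$ and the $i$-th coordinate of $\phi_{4,i,j}^{-1}x$, so both of these vectors end up with exactly one fewer nonzero coordinate than $x$. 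Consequently, if $x$ has $k+1\ge 2$ nonzero coordinates and its support contains a same-sign pair, both right-hand-side terms of (\ref{planeq}) involve arguments with only $k$ nonzero coordinates, and the inductive hypothesis forces $Z(sT^n)(x)=0$. The base case $k=1$ is already covered by the first paragraph.

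A same-sign pair in the support always exists when $|\operatorname{supp}(x)|\ge 3$ by pigeonhole, so the induction goes through in every case except $|\operatorname{supp}(x)|=2$ with entries of opposite signs. For such an $x$ I would pick any index $m\notin\operatorname{supp}(x)$ (possible since $n\ge 3$), apply the previous step to $x+\varepsilon e_m$ (which has three nonzero coordinates, and thus automatically a same-sign pair regardless of the sign of $\varepsilon$) to deduce $Z(sT^n)(x+\varepsilon e_m)=0$, and then let $\varepsilon\to 0^+$, using the continuity $Z(sT^n)\in\cfo$ on $\ro$ to conclude $Z(sT^n)(x)=0$. The main obstacle is really bookkeeping: correctly formulating the generalization (\ref{planeq}) for arbitrary coordinate pairs via even permutations preserving $T^n$, and verifying that the sign case analysis is exhaustive. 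Once these are in place the induction runs mechanically.
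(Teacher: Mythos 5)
Your same-sign reduction is sound, but the way you close the induction is circular, and this is a genuine gap. The step with $\lambda=x_i/(x_i+x_j)$ applies only when $x_i$ and $x_j$ have the same sign, and in that case both output vectors $\phi_{3,i,j}^{-1}x$ and $\phi_{4,i,j}^{-1}x$ carry the same sign pattern as $x$ with one entry of that common sign deleted (the non-deleted entries of $\phi_{3,i,j}^{-1}x$ are positive multiples of the corresponding entries of $x$, and likewise for $\phi_{4,i,j}^{-1}x$). Consequently a vector with two positive and one negative entries can only be reduced through its positive pair, and both resulting vectors have one positive and one negative entry. So deducing $Z(sT^n)(x+\varepsilon e_m)=0$ for your perturbed three-support vector already requires the vanishing on two-support opposite-sign vectors --- exactly the case you are trying to derive from it. More generally, any $x$ with at least one positive and one negative coordinate funnels, under repeated same-sign reductions, into the irreducible pattern of one positive and one negative entry, so the whole mixed-sign case is left unproved.

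The missing ingredient is to run the dissection identity \eqref{e1} ``in reverse'' for an opposite-sign pair, exactly as in the proof of Lemma \ref{lm6}: substituting $x=\lambda^{-1/n}(x_1-r)e_1$ with $\lambda=(x_1-r)/x_1$ (and rescaling $s$) into \eqref{e1} makes the mixed point $x_1e_1-re_2$ appear as the argument of one of the two terms, so one can solve for $Z(sT^n)(x_1e_1-re_2)$ in terms of two values of $Z$ on the $e_1$-axis, as in \eqref{j1}; these vanish by the hypothesis together with your first step, which settles the opposite-sign two-support case for $x_1\neq r$, and the diagonal case $x_1=r$ follows by the continuity argument of \eqref{j5}. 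A further small point: for $n=3$ not every ordered pair $(i,j)$ is the image of $(1,2)$ under an even permutation, so you should either work with unordered pairs (swapping $i$ and $j$ amounts to replacing $\lambda$ by $1-\lambda$) or simply redo the construction of $\phi_3,\phi_4$ for the pair $(i,j)$ directly. With these repairs your induction closes and coincides with the procedure of \cite[Lemma 5.4]{li2020} that the paper invokes.
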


The ``only if" part of Theorem \ref{thm0} follows directly from Lemma \ref{lm2}, Lemma \ref{thm4}, and the following theorem.
\begin{theorem}\label{thm0aa}
Let $n\geq 3$. If a map  $Z:\mathcal{T}_{o}^n \rightarrow C(\mathbb{R}^n  \setminus \{ o \})$ is an ${\sln}$ contravariant valuation, then there are constants $c_{n-1},c_0,c_0'\in \mathbb{R}$ and a binary function $\zeta:\mathbb{R}\times (0,\infty)\rightarrow \mathbb{R}$ satisfying that $\zeta(t,\cdot)$ is Cauchy function on $(0,\infty)$ for every $t\in \mathbb{R}$ and $\zeta(\cdot,s)$ is continuous on $\mathbb{R}$ for every $s>0$ such that
\begin{align*}
ZT(x)   &=\Pi_\zeta(T)(x)+c_{n-1}V_1(T,[-x,x])+c_0V_0(T)+c_0'(-1)^{\dim T}V_0(o \cap \relint T)
\end{align*}
for every $T\in\mathcal{T}_o^n$ and $x\in \mathbb{R}^n \setminus \{o\}$.
\end{theorem}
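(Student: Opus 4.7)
The plan is to peel off the four summands of the claimed formula one at a time, each step producing a new $\sln$ contravariant valuation on $\MTon$ that vanishes on more and more test polytopes, until we are left with a simple valuation vanishing on every $sT^n$, which by Lemma \ref{lm7} and $\sln$ contravariance is identically zero. By Lemma \ref{lm2} it suffices to check the formula on the scaled standard simplices $sT^d$ for $s>0$ and $0\le d\le n$; the full statement on $\MTon$ then follows because every $d$-dimensional $T\in\MTon$ equals $\phi(sT^d)$ for some $\phi\in\sln$ and $s>0$, and $\sln$ contravariance of both sides transfers the equality.

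I would first read off the two Euler-type constants from low-dimensional test cases. Since $\Pi_\zeta(\{o\})$, $\Pi_\zeta([o,e_1])$, $V_1(\{o\},[-x,x])$, and $V_1([o,e_1],[-x,x])$ all vanish, the form of the answer forces $c_0:=Z[o,e_1](e_n)$ and $c_0':=Z\{o\}(e_n)-c_0$. Both $V_0(\cdot)$ and $(-1)^{\dim(\cdot)}V_0(o\cap\relint(\cdot))$ are $\sln$-invariant (hence trivially $\sln$ contravariant as constants in $x$) valuations on $\MTon$, so $Z_1 T(x):=ZT(x)-c_0V_0(T)-c_0'(-1)^{\dim T}V_0(o\cap\relint T)$ is again an $\sln$ contravariant valuation, with $Z_1\{o\}(e_n)=0=Z_1[o,e_1](e_n)$. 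Lemma \ref{lm4} then produces $c_{n-1}$ such that $Z_1T(x)=c_{n-1}V_1(T,[-x,x])$ on every lower-dimensional $T\in\MTon$; since $V_1(\phi P,[-x,x])=V_1(P,[-\phi^{-1}x,\phi^{-1}x])$ shows that $V_1(\cdot,[-x,x])$ is itself an $\sln$ contravariant valuation, the difference $Z_2 T(x):=Z_1T(x)-c_{n-1}V_1(T,[-x,x])$ is a \emph{simple} $\sln$ contravariant valuation on $\MTon$.

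Next I would feed $Z_2$ into Lemmas \ref{lm5} and \ref{lm6}: Lemma \ref{lm5} produces $\zeta$ on $\R\setminus\{0\}\times(0,\infty)$ encoding $Z_2(sT^n)(te_n)=\zeta(t/s,s^n/n!)$, and Lemma \ref{lm6} extends $\zeta$ to $\R\times(0,\infty)$ so that $\zeta(\cdot,s)$ is continuous on all of $\R$ and $\zeta(0,\cdot)$ is still a Cauchy function. By Lemma \ref{thm4}, $\Pi_\zeta$ is then a simple $\sln$ contravariant valuation, and a direct computation (the only outer normal of $sT^n$ not contained in a hyperplane through $o$ is $(1,\dots,1)/\sqrt{n}$, with cone-volume weight $s^n/n!$) gives $\Pi_\zeta(sT^n)(te_n)=\zeta(t/s,s^n/n!)$. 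Hence $Z_3:=Z_2-\Pi_\zeta$ is a simple $\sln$ contravariant valuation with $Z_3(sT^n)(te_n)=0$ for every $s>0$ and $t\ne 0$; Lemma \ref{lm7} upgrades this to $Z_3(sT^n)(x)=0$ for every $x\in\R^n\setminus\{o\}$, and then $\sln$ contravariance spreads the vanishing to every $n$-dimensional $T\in\MTon$, while simplicity handles the rest, so $Z_3\equiv 0$ on $\MTon$.

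I expect the main obstacle to be the careful verification that each subtracted map is itself a genuine $\sln$ contravariant valuation on $\MTon$ (so that the remainders are still valuations that the next lemma can be applied to), in particular the non-obvious Euler term $(-1)^{\dim T}V_0(o\cap\relint T)$; the analytic heavy lifting, namely the Cauchy/continuity structure of $\zeta$ and especially the extension across $t=0$ that makes $\Pi_\zeta$ well-defined as a $\cfo$-valued map whenever $x\cdot u=0$ occurs, has already been done in Lemmas \ref{lm5}--\ref{lm7}, so once the bookkeeping is in order the argument assembles cleanly.
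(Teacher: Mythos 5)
Your proposal is correct and follows essentially the same route as the paper: subtract the two Euler-type terms determined by $Z\{o\}(e_n)$ and $Z[o,e_1](e_n)$, apply Lemma \ref{lm4} to peel off $c_{n-1}V_1(\cdot,[-x,x])$, then use Lemmas \ref{lm5} and \ref{lm6} to produce $\zeta$ and Lemma \ref{lm7} to kill the remaining simple valuation. The only cosmetic difference is that you invoke the reduction to $sT^d$ explicitly at the outset, whereas the paper absorbs it into the $\sln$ contravariance arguments inside Lemmas \ref{lm4} and \ref{lm7}.
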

\begin{proof}
Let $Z: \mathcal{T}_{o}^{n} \rightarrow  C(\mathbb{R}^n \setminus \{ o \})$ be an ${\sln}$ contravariant valuation. Set $c_0:=Z[0,e_1](e_n)$ and $c_0'=Z\{o\}(e_n)-c_0$.
The new map $Z':\MTon \to \cfo$ defined by
$$Z'T=ZT-c_0 V_0 (T)- c_{0}' (-1)^{\dim T} V_{0}(o\cap \relint T)$$
is an ${\sln}$ contravariant valuation satisfying
$Z'\{o\}(e_n)=0 ~ {\rm and} ~ Z'[o,e_1](e_n)=0.$

By Lemma $\ref{lm4}$, we have
$Z'T(x) = c_{n-1} V_1 (T,[-x,x])$
for every $x\in \mathbb{R}^{n} \backslash \{ o\}$ and $T\in \mathcal{T}_{o}^{n}$ satisfying $\dim T \leq n-1$.
Now set
\begin{equation*}
Z''T(x)=ZT(x)-c_0 V_0(T)-c_0'(-1)^{\dim T} V_{0}(o\cap \relint T)-c_{n-1} V_1(T,[-x,x]) 
\end{equation*}
for every $T\in \mathcal{T}_{o}^n$ and $x\in \mathbb{R}^n \backslash \{ o\}$.
Thus $Z''$ is a simple and ${\sln}$ contravariant valuation.
Then Lemmas {\ref{lm5}} and {\ref{lm6}} show that there is a binary function $\zeta : \mathbb{R} \times (0,\infty) \rightarrow \mathbb{R}$ satisfying that $\zeta(t,\cdot)$ is Cauchy function on $(0,\infty)$ for every $t\in \mathbb{R}$ and $\zeta(\cdot, s)$ is continuous for every $s>0$ such that
\begin{equation*}
Z''(sT^n)(te_n)=\Pi_{\zeta}(sT^n)(te_n)
\end{equation*}
for $s>0$ and $t\in \mathbb{R}\setminus \{0\}$.
By Lemma $\ref{thm4}$, $\Pi_{\zeta}$ is a simple and $\sln$ contravariant valuation on $\MTon$
(Remark that we need $\zeta$ to be defined on $\R \times (0,\infty)$ so that $\Pi_{\zeta}$ is well-defined).
Applying with Lemma {\ref{lm7}} for $Z''-\Pi_\zeta$, we obtain the desired result.
\end{proof}

\begin{proof}[Proof of the ``only if" part of Theorem \ref{thm2}]
Let $Z: \mathcal{P}^n\rightarrow C(\mathbb{R}^n\setminus \{o\})$ be an ${\sln}$ contravariant valuation. From Theorem \ref{thm0aa}, there are constants $a_{n-1},a_{0},a_{0}'$ and a binary function $f_1:\R \times (0,\infty)$ satisfying that $f_{1}(t,\cdot)$ is a Cauchy function on $(0,\infty)$ for every $t\in \mathbb{R}$ and $f_{1}(\cdot,s)$ is continuous on $\mathbb{R}$ for every $s>0$ such that
\begin{equation}\label{21}
\begin{aligned}
&Z(sT^{d})(x) \\
&=\Pi_{f_1}(sT^d)(x)+a_{n-1}V_{1}(sT^d,[-x,x])+a_0V_0(sT^d)+(-1)^d a_0' V_0(o \cap \relint sT^d)
\end{aligned}
\end{equation}
for every $x\in \mathbb{R}^n \setminus\{o\}$.

For any $T\in \mathcal{T}_{o}^n \setminus \{o\}$, we write $T'$ for its facet opposite to the origin.
We define the new valuation $ \check{Z}:\mathcal{T}_{o}^n \rightarrow C(\mathbb{R}^n \setminus \{o\})$ by $\check{Z}(T)=Z(T')$ for any $T\in \mathcal{T}_{o}^n \setminus \{ o\}$ and $ \check{Z}\{o\}=0$.
Since $Z$ is ${\sln}$ contravariant, then $\check{Z}$ is also ${\rm SL}(n)$ contravariant on $\mathcal{T}_{o}^n$. Using Theorem $\ref{thm0aa}$ again, there are constants $b_{n-1},b_0, b_{0}'$ and a binary function $f_2:\R \times (0,\infty)$ satisfying that $f_{2}(t,\cdot)$ is a Cauchy function on $(0,\infty)$ for every $t\in \mathbb{R}$ and $f_{2}(\cdot,s)$ is continuous on $\mathbb{R}$ for every $s>0$ such that
\begin{align}\label{22}
Z(s[e_1,\cdot \cdot \cdot, e_d])(x)
=\check{Z}(sT^d)(x)=\Pi_{{f}_{2}}(sT^d)(x)+b_{n-1}V_{1}(sT^d, [-x,x]) + b_0 V_0(sT^d)
\end{align}
for every $x\in \mathbb{R}^n \setminus \{o\}$, where $(-1)^{d} V_{0}(o\cap \relint(sT^d) )=0$, since $d\geq 1$ in this case.

Now we construct new constants $c_0,c_{0}',\tilde{c}_{0},c_{n-1},\tilde{c}_{n-1}$ and binary functions ${\zeta}_{1}, {\zeta}_{2}:\mathbb{R}\times (0,\infty)\rightarrow\mathbb{R}$ satisfying ${\zeta}_{1}(t,\cdot),{\zeta}_{2}(t,\cdot)$ are Cauchy functions on $(0,\infty)$ for every $t\in \mathbb{R}$ and ${\zeta}_{1}(\cdot,s),{\zeta}_{2}(\cdot,s)$ are continuous for every $s>0$ such that
$$
{\zeta}_{1}(t,s)=f_{1}(t,s)-f_{2}(t,s)+(a_{n-1}-b_{n-1})|t|s, ~~ {\zeta}_{2}(t,s)=f_{2}(t,s)-(a_{n-1}-b_{n-1})|t|s
$$
and $c_{n-1}=a_{n-1}-b_{n-1}$, $\tilde{c}_{n-1}=b_{n-1}$, $c_0=b_0$,
$\tilde{c}_{0}=a_0-b_0$, $c_{0}'=a_{0}'$.

Next, we extend $\zeta_{1}, \zeta_{2}$ to be defined on $\R \to \R$ by
$
\zeta_i(t,-s)=-\zeta_i(t,s)$, and $ \zeta_i(t,0)=0$
for every $t\in \R$, $s>0$ and $i=1,2$.
Therefore, $\zeta_i(t,\cdot)$ is a Cauchy function on $\R$ for every $t \in \R$.
For brevity, we denote
\begin{equation}\label{d3}
\begin{aligned}
AP(x)
&:=\Pi_{\zeta_1}(P)(x)+ \Pi_{\zeta_{2}}([o,P])(x)   + c_{n-1}V_1(P,[-x,x]) +\tilde{c}_{n-1}V_{1}([o,P],[-x,x]) \\
&\quad \quad \quad   +c_0V_0(P) + c_0'(-1)^{\dim P}V_{0}(o \cap \relint P)+ \tilde{c}_{0}  V_{0}( o \cap  P).
\end{aligned}
\end{equation}
Then from the equation (\ref{21}) and (\ref{d3}), we have
$$A({sT^d})(x)=Z(sT^d)(x)$$
for every $x\in \mathbb{R}^n\setminus \{o\}$ and $0\leq d \leq n$.
By $(\ref{22})$, $(\ref{d3})$ and $V_{1}(s[e_1,\cdot \cdot \cdot ,e_d],[-x,x])=\left|  \frac{x\cdot u_{0}}{h_{sT^d}(u_0)}  \right|  V_{n}(sT^d)$, we have
\begin{align*} \label{d4}
A(s[e_1,\cdot \cdot \cdot ,e_d])(x)
&=\Pi_{f_2}(sT^d)(x) -(a_{n-1}-b_{n-1})\left|  \frac{x\cdot u_{0}}{h_{sT^d}(u_0)}  \right|  V_{n}(sT^d) \\
& \qquad+(a_{n-1}-b_{n-1})V_{1}(s[e_1,\cdot \cdot \cdot ,e_d],[-x,x])
+\tilde{c}_{n-1} V_{1}(sT^d, [-x,x]) +c_0  \\
&= Z(s[e_1,\cdot \cdot \cdot,e_d])(x)
\end{align*}
for every $x\in \mathbb{R}^n \setminus \{o\}$, where $u_0=\frac{1}{\sqrt{n}}(e_1+e_2+\cdot \cdot \cdot +e_n)$.
By Lemma \ref{lm4} and Lemma \ref{lm0.4}, $A$ is an $\sln$ contravariant valuation on $\MP^n$.
Together with Lemma $\ref{lm0.3}$, we get $ZP=AP$ for every $P\in \mathcal{P}^n$, which completes the proof of ``only if" part of Theorem $\ref{thm2}$.
\end{proof}

\section{Proof of the corollaries} 

\begin{proof}[Proof of Corollary \ref{cor1} and Corollary \ref{cor0815}.]
Denote by $\tau_1,\tau_2$ be two topologies on $\cf$ induced by the uniform convergence on each compact subset in $\ro$ and the uniform convergence on each compact subset in $\R^n$, respectively.
It is easy to see that $\tau_1 \subset \tau_2$.

Claim 1. The ``only if" part of Corollary \ref{cor1} implies Corollary \ref{cor0815}.

\emph{Proof of Claim 1.} Suppose $Z:\MPon \to \cf$ is a measurable map with respect to $\tau_2$.
Then $Z^{-1}(\beta)$ is a Borel set in $\MPon$ for any $\beta \in \tau_2$.
Let $\tilde \beta$ be an arbitrary open set in $\cfo$.
Notice that $\tilde \beta \cap \cf \in \tau_1 \subset \tau_2$.
Then $Z^{-1} (\tilde \beta)=Z^{-1} (\tilde \beta \cap \cf)$ is also a Borel set in $\MPon$.
Thus $Z:\MPon \to \cfo$ is also measurable. That confirms the claim.

Claim 2. Theorem \ref{thm0} implies the ``only if" part of Corollary \ref{cor1}.

\emph{Proof of Claim 2.} Suppose $Z:\MPon \to \cfo$ is measurable. By Theorem \ref{thm0},  $\Pi_{\zeta}$ must be measurable.
As the proof of \cite[Theorem 1.2]{li2020}, we decompose the function $s \mapsto \Pi_{\zeta}(sT^n)(ste_n)$ as
$$s  \mapsto (ste_n,sT^n) \mapsto (ste_n,Z(sT^n)) \mapsto Z(sT^n)(ste_n)$$
for every fixed $t \in \mathbb{R} \setminus \{0\}$.
Since the first and third maps are continuous and the second map is Borel measurable, the function $s \mapsto \Pi_{\zeta}(sT^n)(ste_n)$ is also Borel measurable.
Notice that we have shown in Lemma \ref{lm5} that $\Pi_{\zeta}(sT^n)(ste_n)=\zeta\ab{t,\frac{s^n}{n!}}$.
It turns out that $\zeta(t,\cdot)$ is measurable for every $t \in \R \setminus \{0\}$.
Since $\zeta(t,\cdot)$ is also a Cauchy function, there is a function $\eta: \R \setminus \{0\} \to \R$ such that $\zeta(t,s)=\eta(t)s$ for every $t\in  \R \setminus \{0\}$ and $s>0$.
Now $\zeta(0,s)=\lim_{t \to 0} \zeta(t,s) = \lim_{t \to 0} \eta(t) s$ implies that $\lim_{t \to 0} \eta(t)$ exists, which is denoted by $\eta(0)$.
That confirms the claim.

Claim 3. the “if” part of Corollary \ref{cor1} is correct, that is, for continuous $\eta:\R \to \R$, the map $\tilde \Pi_\eta:\MPon \to \cfo$ defined by
\begin{equation}\label{eq826}
\tilde \Pi_{\eta}P(x)=\sum\limits_{u\in  \mathcal{N}_o(P)} \eta\ab{\frac{x\cdot u}{h_P(u)}} V_P(u), ~P\in \MPon, ~x\in \ro
\end{equation}
is Borel measurable.

\emph{Proof of Claim 3.}
By allowing $x=o$ in \eqref{eq826}, Theorem 1.2 in \cite{li2020} already shown that $\tilde \Pi_\eta P \in \cf$ for every $P\in \MPon$ and $\tilde \Pi_\eta:\MPon \to \cf$ is Borel measurable with respect to the topology $\tau_2$.
Now let $\beta \in \cfo$ be an open subset and thus $\beta \cap \cf \in \tau_1 \subset \tau_2$.
Then $\tilde \Pi_\eta^{-1}(\beta)=\tilde \Pi_\eta^{-1}(\beta \cap \cf)$ is a Borel set in $\MPon$, which confirms the claim.
\end{proof}

To prove Corollary \ref{1.6}, we first state a stronger result without proof since it is similar to the proofs of Corollaries 2.1 and 2.2 in \cite{li2020}, where we should now use Theorem \ref{thm0} instead of Corollary \ref{cor0815}.
Let real $p\ge 0$. A function $f:\ro  \rightarrow \mathbb{R}$ is called \emph{$p$-homogeneous} if $f(\lambda x)=\lambda^{p} f(x)$ for any $\lambda >0$ and $x\in \mathbb{R}^n$.
For $t\in \R$, we denote $t_+^p:=(\max\{t,0\})^p$ and $t_-^p:=(\max\{-t,0\})^p$.

\begin{corollary} \label{c11}
Let $n \ge 3$ and real $p\geq0$.
A map $Z:\mathcal{P}_{o}^{n}\rightarrow \cfo$ is an ${\sln}$ contravariant valuation satisfying that $ZP$ is $p$-homogeneous for every $P\in \MPon$, if and only if there exist Cauchy functions $\xi_1,\xi_2,\xi_3:(0,\infty)\rightarrow \mathbb{R}$ and constants $c_0,c_0',c_{n-1}\in \mathbb{R}$ such that
\begin{align*}
ZP(x)=\sum\limits_{u\in  \mathcal{N}_o(P)} \ab{\frac{x\cdot u}{h_P(u)}}^p_+ \xi_1(V_P(u)) + \sum\limits_{u\in \mathcal{N}_o (P)}\ab{\frac{x\cdot u}{h_P(u)}}^p_- \xi_2(V_P(u)),
\end{align*}
when $p \notin \{0,1\}$;
\begin{align*}
ZP(x)&=\sum\limits_{u\in  \mathcal{N}_o(P)} \ab{\frac{x\cdot u}{h_P(u)}}_+ \xi_1(V_P(u)) + \sum_{u\in \mathcal{N}_{o}(P)}
 \ab{\frac{x\cdot u}{h_P(u)}}_- \xi_2(V_P(u)) \\
 &\qquad + c_{n-1}V_{1}(P,[-x,x]),
\end{align*}
when $p=1$; and
\begin{align*}
  ZP(x)=\xi_{3}(V_{n}(P)) +c_0V_{0}(P) + c_{0}'(-1)^{\dim P} V_{0}(o\cap \relint P),
\end{align*}
when $p=0$, for every $P\in \mathcal{P}_{o}^n$ and $x\in \mathbb{R}^n\setminus \{o\}$.
\end{corollary}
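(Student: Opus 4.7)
The plan is to apply Theorem \ref{thm0} to write the given valuation as
\[
ZP(x) = \Pi_\zeta(P)(x) + c_{n-1}V_1(P,[-x,x]) + c_0V_0(P) + c_0'(-1)^{\dim P}V_0(o\cap\relint P),
\]
and then impose the $p$-homogeneity $ZP(\lambda x) = \lambda^p ZP(x)$ on carefully chosen test polytopes to pin down $\zeta$ and the constants $c_0, c_0', c_{n-1}$. The ``if" direction is routine, since each summand above is itself a $p$-homogeneous $\sln$ contravariant valuation by Lemma \ref{thm4}, so I concentrate on ``only if''.

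I would first constrain $c_0, c_0'$ with low-dimensional probes. Taking $P = \{o\}$ reduces the formula to the constant $c_0 + c_0'$, so $p$-homogeneity forces $c_0 + c_0' = 0$ for $p > 0$. Taking the one-dimensional $P = [o, e_1]$ (where $\mathcal{N}_o(P) = \emptyset$, the surface area measure vanishes since $n \ge 3$, and $o \notin \relint P$) reduces the formula to $c_0$, forcing $c_0 = 0$ and hence $c_0' = 0$ whenever $p > 0$. Next I would test $P = sT^n$, where $\mathcal{N}_o(sT^n) = \{u_0\}$ with $u_0 = (e_1+\cdots+e_n)/\sqrt n$ and $V_{sT^n}(u_0) = s^n/n!$, and direct computation expresses both $\Pi_\zeta(sT^n)(x)$ and $V_1(sT^n,[-x,x])$ explicitly in $x_1,\dots,x_n$ and $s$. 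Testing two directions is crucial here: $x = te_n$ yields the functional equation
\[
\zeta(\lambda\tau,\sigma) = \lambda^p\zeta(\tau,\sigma) + 2(\lambda^p - \lambda)c_{n-1}|\tau|\sigma \qquad (\tau := t/s,\; \sigma := s^n/n!),
\]
while $x = e_1 - e_2$ (whose coordinate sum vanishes, killing the first argument of $\zeta$) yields the scalar relation $A + \lambda B = \lambda^p(A + B)$ valid for all $\lambda > 0$, where $A := \zeta(0,\sigma) + c_0$ and $B := 2 c_{n-1} s^{n-1}/n!$.

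The case analysis then follows. For $p \notin \{0,1\}$, comparing the growth rates of $\lambda$ and $\lambda^p$ in $A + \lambda B = \lambda^p(A + B)$ forces $A = B = 0$, hence $\zeta(0,\sigma) = 0$ and $c_{n-1} = 0$; the functional equation collapses to $\zeta(\lambda\tau,\sigma) = \lambda^p\zeta(\tau,\sigma)$, and continuity in $\tau$ then yields $\zeta(\tau,\sigma) = \tau_+^p\xi_1(\sigma) + \tau_-^p\xi_2(\sigma)$ with $\xi_i(\sigma) := \zeta(\pm 1,\sigma)$ (Cauchy in $\sigma$ by Theorem \ref{thm0}). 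For $p = 1$, the same decomposition arises with exponent $1$, but $c_{n-1}$ is no longer constrained, which matches the explicit $V_1$ term in the target formula. For $p = 0$, the scalar relation gives $B = 0$ so $c_{n-1} = 0$, the functional equation forces $\zeta$ constant in $\tau$, say $\zeta(\tau,\sigma) = \tilde\xi(\sigma)$; using $V_P(u) = 0$ whenever $h_P(u) = 0$, we have $V_n(P) = \sum_{u\in\mathcal{N}_o(P)}V_P(u)$, so the Cauchy property of $\tilde\xi$ telescopes $\Pi_\zeta(P)(x) = \sum_{u\in\mathcal{N}_o(P)}\tilde\xi(V_P(u)) = \tilde\xi(V_n(P)) =: \xi_3(V_n(P))$.

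The main obstacle is the decoupling of $c_{n-1}$ from $\Pi_\zeta$ for $p \neq 1$: the functional equation extracted from $x = te_n$ alone leaves an ambiguity that tolerates absorbing $c_{n-1}|\tau|\sigma$-type contributions into a redefined $\zeta$, so the second test direction $x = e_1 - e_2$ is essential, as it produces the scalar relation in which the growth rates $\lambda$ and $\lambda^p$ can be separated (for $p \neq 0, 1$). After these extractions on every simplex $sT^d$, Lemma \ref{lm2} propagates the resulting formula to all of $\MPon$, completing the argument.
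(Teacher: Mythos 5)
Your argument is correct and is exactly the route the paper intends: the paper omits the proof of Corollary~\ref{c11}, deferring to Corollaries~2.1 and~2.2 of \cite{li2020}, whose method is precisely to feed the representation of Theorem~\ref{thm0} into the homogeneity condition on test bodies ($\{o\}$, $[o,e_1]$, $sT^n$) and solve the resulting functional equations for $\zeta$ and the constants. Your computations check out, including the essential second test direction $x=e_1-e_2$ needed to decouple $c_{n-1}$ from a possible $|\tau|$-term absorbed into $\zeta$, and the telescoping of $\tilde\xi$ via its Cauchy property in the $p=0$ case.
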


Now we use Corollary \ref{c11} to prove Corollary \ref{1.6}.
\begin{proof}[Proof of Corollary \ref{1.6}]
Let $p \ge 1$ be an integer.
Since
\begin{align*}
\langle M_{\xi}^{0,p}(P), x^p \rangle=\sum_{u\in  \mathcal{N}_{o}(P)} \ab{\frac{x\cdot u}{h_{P}(u)}}^p \xi\left( V_{P}(u)  \right).
\end{align*}
The ``if" part of Theorem \ref{thm0} implies that $M_{\xi}^{0,p}:\MPon \to \symtenset{n}{p}$ is an $\sln$ contravariant valuation.

Let $\mu:\MPon \to \symtenset{n}{p}$ be an $\sln$ contravariant valuation.
Then $Z:\MPon \to \cfo$ defined by
\begin{align*}
ZP(x):= \langle   \mu (P) , x^p  \rangle, ~P\in\MPon,~x\in \ro,
\end{align*}
is an $\sln$ contravariant valuation such that $ZP$ is $p$-homogeneous for every $P\in \MPon$.
Also
\begin{align}\label{eq826-2}
ZP(-x)=(-1)^pZP(x) .
\end{align}
Applying Corollary \ref{c11} for $P\in \MPon$ with $\dim P <n$, we get $c_{n-1}=0$, otherwise it contradicts \eqref{eq826-2}.
Further applying Corollary \ref{c11} for $P=sT^n$ and $x=e_1$ and $-e_1$.
Together with \eqref{eq826-2}, we have $\xi_1=\xi_2$ for even $p$, and $\xi_1=-\xi_2$ for odd $p$.
For both cases, we have
\begin{align*}
ZP(x)=\sum_{u\in  \mathcal{N}_{o}(P)} \ab{\frac{x\cdot u}{h_{P}(u)}}^p \xi\left( V_{P}(u)  \right),
\end{align*}
for some Cauchy function $\xi:(0,\infty)\to \R$.
Therefore, $ZP=M_{\xi}^{0,p}(P)$ for every $P\in\MPon$.
\end{proof}

\section*{Acknowledgement}
\addcontentsline{toc}{section}{Acknowledgement}
The work of the second author was supported in part by the National Natural Science Foundation of China (12201388).
The work of the first and the third authors was supported in part by the National Natural Science Foundation of
China (12171304).


\end{document}